\newcommand{\be}{\begin{equation}}
\newcommand{\ee}{\end{equation}}
\newcommand{\bea}{\begin{eqnarray}}
\newcommand{\eea}{\end{eqnarray}}
\newcommand{\ba}{\begin{array}}
\newcommand{\ea}{\end{array}}
\newcommand{\bc}{\begin{center}}
\newcommand{\ec}{\end{center}}
\newcommand{\ben}{\begin{enumerate}}
\newcommand{\een}{\end{enumerate}}
\newcommand{\bfi}{\begin{figure}}
\newcommand{\efi}{\end{figure}}
\newcommand{\bq}{\begin{quote}}
\newcommand{\eq}{\end{quote}}
\newcommand{\bqu}{\begin{quotation}}
\newcommand{\equ}{\end{quotation}}
\newenvironment{emphit}{\begin{itemize}}{\end{itemize}}
\newcommand{\bemp}{\begin{emphit}}
\newcommand{\eemp}{\end{emphit}}
\newcommand{\bt}{\begin{tabular}}
\newcommand{\et}{\end{tabular}}
\newtheorem{myth}{Theorem}[section]
\newtheorem{mylem}{Lemma}[section]
\newtheorem{mydef}{Definition}[section]
\newtheorem{myrem}{Remark}[section]
\begin{document}
\date{}
\title{New Algebraic Properties of Middle Bol Loops
\footnote{2010 Mathematics Subject Classification. Primary 20N02, 20N05}
\thanks{{\bf Keywords and Phrases :} Bol loops, middle Bol loops, Moufang loops}}
\author{T. G. Jaiy\'e\d ol\'a\thanks{All correspondence to be addressed to this author.} \\
Department of Mathematics,\\
Obafemi Awolowo University,\\
Ile Ife 220005, Nigeria.\\
jaiyeolatemitope@yahoo.com\\tjayeola@oauife.edu.ng \and
S. P. David \\
Department of Mathematics,\\
Obafemi Awolowo University,\\
Ile Ife 220005, Nigeria.\\
davidsp4ril@yahoo.com\\
davidsp4ril@gmail.com\and
Y. T. Oyebo\\
Department of Mathematics, Lagos State University, Ojo, Lagos State, Nigeria\\
oyeboyt@yahoo.com,~yakub.oyebo@lasu.edu.ng}\maketitle
\begin{abstract}
A loop $(Q,\cdot,\backslash,/)$ is called a middle Bol loop if it obeys the identity $x(yz\backslash x)=(x/z)(y\backslash x)$. In this paper, some new algebraic properties of a middle Bol loop are established. Four bi-variate mappings $f_i,g_i,~i=1,2$ and four $j$-variate mappings $\alpha_j,\beta_j,\phi_j,\psi_j,~j\in\mathbb{N}$ are introduced and some interesting properties of the former are found. Neccessary and sufficient conditons in terms of $f_i,g_i,~i=1,2$, for a middle Bol loop to have the elasticity property, RIP, LIP, right alternative property (RAP) and left alternative property (LAP) are establsihed. Also, neccessary and sufficient conditons in terms of $\alpha_j,\beta_j,\phi_j,\psi_j,~j\in\mathbb{N}$, for a middle Bol loop to have power  RAP and power LAP are establsihed. Neccessary and sufficient conditons in terms of $f_i,g_i,~i=1,2$ and $\alpha_j,\beta_j,\phi_j,\psi_j,~j\in\mathbb{N}$, for a middle Bol loop to be a group, Moufang loop or extra loop are established. A middle Bol loop is shown to belong to some classes of loops whose identiites are of the J.D. Phillips' RIF-loop and WRIF-loop (generalizations of Moufang and Steiner loops) and WIP power associative conjugacy closed loop types if and only if some identities defined by $g_1$ and $g_2$ are obeyed.
\end{abstract}
\section{Introduction}
\paragraph{}
Let $G$ be a non-empty set. Define a binary operation ($\cdot $) on
$G$. If $x\cdot y\in G$ for all $x, y\in G$, then the pair $(G, \cdot )$
is called a \textit{groupoid} or \textit{Magma}.

If each of the equations:
\begin{displaymath}
a\cdot x=b\qquad\textrm{and}\qquad y\cdot a=b
\end{displaymath}
has unique solutions in $G$ for $x$ and $y$ respectively, then $(G,
\cdot )$ is called a \textit{quasigroup}.

If there exists a unique element $e\in G$ called the
\textit{identity element} such that for all $x\in G$, $x\cdot
e=e\cdot x=x$, $(G, \cdot )$ is called a \textit{loop}. We write
$xy$ instead of $x\cdot y$, and stipulate that $\cdot$ has lower
priority than juxtaposition among factors to be multiplied. For
instance, $x\cdot yz$ stands for $x(yz)$.

Let $x$ be a fixed element in a groupoid $(G, \cdot )$. The left and
right translation maps of $G$, $L_x$ and $R_x$ respectively can be
defined by
\begin{displaymath}
yL_x=x\cdot y\qquad\textrm{and}\qquad yR_x=y\cdot x.
\end{displaymath}
It can be seen that a groupoid $(G, \cdot )$ is a quasigroup if
it's left and right translation mappings are bijections or
permutations. Since the left and right translation mappings of a
loop are bijective, then the inverse mappings $L_x^{-1}$ and
$R_x^{-1}$ exist. Let
\begin{displaymath}
x\backslash y =yL_x^{-1}=y\mathcal{L}_x=x\mathcal{R}_y\qquad\textrm{and}\qquad
x/y=xR_y^{-1}=x\mathbb{R}_y=y\mathbb{L}_x
\end{displaymath}
and note that
\begin{displaymath}
x\backslash y =z\Longleftrightarrow x\cdot
z=y\qquad\textrm{and}\qquad x/y=z\Longleftrightarrow z\cdot y=x.
\end{displaymath}
Hence, $(G, \backslash )$ and $(G, /)$ are also quasigroups. Using
the operations ($\backslash$) and ($/$), the definition of a loop
can be stated as follows.

\begin{mydef}\label{0:1}\textrm{
A \textit{loop} $(G,\cdot ,/,\backslash ,e)$ is a set $G$ together
with three binary operations ($\cdot $), ($/$), ($\backslash$) and
one nullary operation $e$ such that
\begin{description}
\item[(i)] $x\cdot (x\backslash y)=y$, $(y/x)\cdot x=y$ for all
$x,y\in G$,
\item[(ii)] $x\backslash (x\cdot y)=y$, $(y\cdot x)/x=y$ for all
$x,y\in G$ and
\item[(iii)] $x\backslash x=y/y$ or $e\cdot x=x$ for all
$x,y\in G$.
\end{description}}
\end{mydef}
We also stipulate that ($/$) and ($\backslash$) have higher priority
than ($\cdot $) among factors to be multiplied. For instance,
$x\cdot y/z$ and $x\cdot y\backslash z$ stand for $x(y/z)$ and
$x(y\backslash z)$ respectively.

In a loop $(G,\cdot )$ with identity element $e$, the \textit{left
inverse element} of $x\in G$ is the element $xJ_\lambda
=x^\lambda\in G$ such that
\begin{displaymath}
x^\lambda\cdot x=e
\end{displaymath}
while the \textit{right inverse element} of $x\in G$ is the element
$xJ_\rho =x^\rho\in G$ such that
\begin{displaymath}
x\cdot x^\rho=e.
\end{displaymath}
For an overview of the theory of
loops, readers may check \cite{davidref:7,davidref:8,davidref:9,davidref:16,davidref:10,davidref:12,davidref:21,davidref:22}.

A loop $(G, \cdot )$ is said to be a power associative loop if $<x>$ is a
subgroup for all $x\in G$ and a diassociative loop if $<x, y>$ is a
subgroup for all $x, y\in G$.

\begin{mydef}
Let $(G, \cdot )$ be a loop. $G$ is said to be a left alternative property loop (LAPL) if
for all $x, y\in G,~ x\cdot xy=xx\cdot y$, a right alternative property loop (RAPL)if
for all $x, y\in G,~ yx\cdot x=y\cdot xx$, and an alternative loop if it is both left and right alternative.

A power associative loop $(G, \cdot )$ is said to be a power left alternative property loop (PLAPL) if
for all $x, y\in G,~ \underbrace{x(\cdots (x(x}_{n-\textrm{times}}y))\cdots )=x^ny$ and a power right alternative property loop (PRAPL)if
for all $x, y\in G,~ (\cdots((y\underbrace{x)x)\cdots )x}_{n-\textrm{times}}=yx^n$.

A loop $(G,\cdot )$ is called a flexible or an elastic loop if the
flexibility or elasticity property
\begin{displaymath}
xy\cdot x=x\cdot yx
\end{displaymath}
holds for all $x,y\in G$.

$(G, \cdot )$ is said to have the left inverse property (LIP) if
for all $x, y\in G,~  x^\lambda\cdot xy=y$, the right inverse property (RIP) if
for all $x, y\in G,~ yx\cdot x^\rho=y$ and the inverse property if it has both left and right inverse properties.
\end{mydef}

There are some classes of loops which do not have the inverse
property but have properties which can be considered as variations
of the inverse property.

A loop $(G,\cdot )$ is called a weak inverse property loop (WIPL) if
and only if it obeys the identity
\begin{equation}
x(yx)^\rho=y^\rho\qquad\textrm{or}\qquad(xy)^\lambda x=y^\lambda
\end{equation}
for all $x,y\in G$.
\begin{mydef}
A loop $(G,\cdot )$ is called a cross inverse property loop(CIPL) if it obeys the identity
\begin{equation}
xy\cdot x^\rho =y\qquad\textrm{or}\qquad x\cdot yx^\rho
=y\qquad\textrm{or}\qquad x^\lambda\cdot
(yx)=y\qquad\textrm{or}\qquad x^\lambda y\cdot x=y
\end{equation}
for all $x,y,\in G$.

A loop $(G,\cdot )$ is called an automorphic inverse property
loop(AIPL) if it obeys the identity
\begin{equation}
(xy)^\rho=x^\rho y^\rho\qquad\textrm{or}\qquad(xy)^\lambda
=x^\lambda y^\lambda
\end{equation}
for all $x,y,\in G$.

A loop $(G,\cdot )$ is called an anti-automorphic inverse property
loop(AAIPL) if it obeys the identity
\begin{equation}
(xy)^\rho= y^\rho x^\rho\qquad\textrm{or}\qquad(xy)^\lambda
=y^\lambda x^\lambda
\end{equation}
for all $x,y,\in G$.

A loop $(G,\cdot )$ is called a semi-automorphic inverse property
loop(SAIPL) if it obeys the identity
\begin{equation}
(xy\cdot x)^\rho= x^\rho y^\rho\cdot
x^\rho\qquad\textrm{or}\qquad(xy\cdot x)^\lambda =x^\lambda
y^\lambda\cdot x^\lambda
\end{equation}
for all $x,y,\in G$.
\end{mydef}

A loop satisfying the identical relation
 \begin{equation}\label{davideq1}
 (xy\cdot z)y=x(yz\cdot y)
\end{equation}
is called a right Bol loop (Bol loop). A loop satisfying the identical relation
\begin{equation}\label{davideq2}
(x\cdot yx)z=x(y\cdot xz)
\end{equation}
is called a left Bol loop.

A loop $(Q,\cdot)$ is called a middle Bol if it satisfies the identity
\begin{equation}\label{davideq2.1}
x(yz\backslash x) = (x/z)(y\backslash x)
\end{equation}

It is known that the identity \eqref{davideq2.1} is universal under loop isotopy and that the universality of \eqref{davideq2.1} implies the power associativity of the middle Bol loops (Belousov \cite{davidref:24}). Furthermore, \eqref{davideq2.1} is a necessary and sufficient condition for the universality of the anti-automorphic inverse property (Syrbu \cite{davidref:6}). Middle Bol loops were originally introduced in 1967 by Belousov \cite{davidref:24} and were later considered in 1971 by Gwaramija \cite{davidref:23}, who proved that a loop $(Q,\circ)$ is middle Bol if and only if there exists a right Bol loop $(Q,\cdot)$ such that
\begin{equation}\label{davideq2.2}
x\circ y=(y\cdot xy^{-1})y,~\textrm{for every}~x,y\in Q.
\end{equation}
This result of Gwaramija \cite{davidref:23} is formally stated below:
\begin{myth}\label{davidthm1}
If $(Q,\cdot)$ is a left (right) Bol loop then the groupoid $(Q,\circ)$, where $x\circ y=y(y^{-1}x\cdot y)$ (respectively, $x\circ y=(y\cdot xy^{-1})y$), for all $x,y \in Q$ , is a middle Bol loop and, conversely, if $(Q,\circ)$ is a middle Bol loop then there exists a left(right) Bol loop $(Q,\cdot)$ such that $x\circ y=y(y^{-1}x\cdot y)$ (respectively, $x\circ y=(y\cdot xy^{-1})y$), for all $x,y \in Q$.
\end{myth}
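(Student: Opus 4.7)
My plan is to treat the two implications separately, focusing on the right-Bol / right-formula version since the left version follows by passing to the opposite loop. Throughout, let $e$ denote the identity of $(Q,\cdot)$; two-sided inverses $y^{-1}$ exist because right Bol loops are known to enjoy the right inverse property. For the forward direction, I would first verify that $(Q,\circ)$ is a loop. Substituting $x=e$ and $y=e$ separately into $x\circ y=(y\cdot xy^{-1})y$ shows that $e$ is a two-sided identity. The $\circ$-right translation decomposes as $R^{\circ}_y=R_{y^{-1}}L_yR_y$ (all translations in $(\cdot)$) and is therefore a bijection as a composition of bijections. Bijectivity of the $\circ$-left translation $L^{\circ}_x$ amounts to solving $(y\cdot xy^{-1})y=z$ uniquely for $y$, which I would obtain by rearranging through the right Bol identity \eqref{davideq1}.

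The technical core of the forward direction is verifying the middle Bol identity \eqref{davideq2.1} in $(Q,\circ)$. My plan is to first derive closed forms for $a\backslash_{\circ}b$ and $a/_{\circ}b$ in terms of $(\cdot)$ and $^{-1}$, by solving $a\circ c=b$ and $c\circ a=b$ respectively. Substituting these expressions into $x\circ(y\circ z\backslash_{\circ}x)=(x/_{\circ}z)\circ(y\backslash_{\circ}x)$ turns it into a word equation purely in $(Q,\cdot)$. The main obstacle here will be reducing both sides to a common normal form through repeated application of $(xy\cdot z)y=x(yz\cdot y)$ together with the right inverse property; a judicious choice of intermediate substitutions (e.g.\ $u=xy^{-1}$ or $v=y^{-1}x$) should keep the bookkeeping under control.

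For the converse, I plan to use a principal-isotopy argument. Given $(Q,\circ)$ middle Bol, the candidate operation $\cdot$ is essentially forced by the defining relation $x\circ y=(y\cdot xy^{-1})y$: inverting this through the two-sided cancellation available in any loop yields a formula for $y\cdot t$ purely in terms of $\circ$. I would take that formula as the definition of $\cdot$, verify that it gives a loop structure on $Q$ sharing the identity element with $(Q,\circ)$, and then check by a dual computation---now using the middle Bol identity as the working hypothesis---that the resulting operation satisfies \eqref{davideq1}. The delicate point in this direction is showing that the candidate $\cdot$ is well defined (independent of any auxiliary choice used in inverting the relation) and that its inverses coincide with those implicit in the formula, after which the right Bol identity should fall out from the same kind of identity-juggling as in the forward direction.
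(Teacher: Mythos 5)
First, a point of comparison: the paper does not prove this statement at all --- Theorem~\ref{davidthm1} is quoted verbatim from Gwaramija's 1971 paper and used as a known result, so there is no in-paper argument to measure yours against. Judged on its own terms, your proposal is a plan rather than a proof, and the two places where you defer the work are exactly the two places where the theorem lives. In the forward direction you reduce everything to ``reducing both sides to a common normal form through repeated application of $(xy\cdot z)y=x(yz\cdot y)$'' and assert that judicious substitutions ``should keep the bookkeeping under control''; that reduction is the entire content of the implication, and nothing you have written guarantees it terminates in the identity you want. The step you are missing is the simplification that makes the computation tractable (and which the paper itself records in the Remark following the theorem): in a right Bol loop, $y^{-1}\cdot\big((y\cdot xy^{-1})y\big)=(y^{-1}y\cdot xy^{-1})y=x$ by \eqref{davideq1} and RIP, so $x\circ y=y^{-1}\backslash x$. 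From this one line you get bijectivity of $L^{\circ}_x$ for free (composition of $y\mapsto y^{-1}$ with $u\mapsto u\backslash x$), closed forms $x/_{\circ}z=z^{-1}x$ and $y\backslash_{\circ}x=(y/x)^{-1}$, and the middle Bol identity \eqref{davideq2.1} for $\circ$ collapses to a single concrete identity in $(Q,\cdot,\backslash,/)$, namely $\big((u\backslash y)/x\big)\backslash x=(y/x)\backslash(ux)$, which still has to be verified from \eqref{davideq1} but is at least a well-posed finite task. Without isolating that identity, your ``normal form'' step is not checkable.

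The converse as you describe it is also circular. You propose to obtain $\cdot$ by ``inverting'' $x\circ y=(y\cdot xy^{-1})y$ through cancellation, but that relation involves the multiplication and the inverse map of the loop $(Q,\cdot)$ you have not yet constructed; there is nothing to cancel in. The standard (and, I believe, Gwaramija's) route is to \emph{define} $\cdot$ directly as an isostrophe of $\circ$, e.g.\ $x\cdot y=y/_{\circ}\,x^{-1}$ with $x^{-1}$ the (unambiguous, by Lemma~\ref{davidlem9}(a)) inverse in $(Q,\circ)$, then verify that this is a loop with the same identity, that its inverse map agrees with that of $(Q,\circ)$, that it satisfies \eqref{davideq1} using the middle Bol identity as hypothesis, and finally that the defining relation $x\circ y=(y\cdot xy^{-1})y$ is recovered. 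Your worry about the candidate operation being ``well defined'' is a symptom of the circularity, not a technicality to be smoothed over. Finally, note that your opening reduction of the left-Bol case to the right-Bol case via the opposite loop also needs a sentence of justification, since the two formulas $y(y^{-1}x\cdot y)$ and $(y\cdot xy^{-1})y$ are opposites of each other only after one checks how inversion interacts with passing to the opposite loop.
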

\begin{myrem}
Theorem~\ref{davidthm1} implies that if $(Q,\cdot)$ is a left Bol loop and $(Q,\circ)$ is the corresponding middle Bol loop then $x\circ y=x/y^{-1}$ and $x\cdot y=x//y^{-1}$ , where "$/$" ("$//$") is the right division in $(Q,\cdot)$ (respectively, in $(Q,\circ)$). Similarly, if $(Q,\cdot)$ is a right Bol loop and $(Q,\circ)$ is the corresponding middle Bol loop then $x\circ y=y^{-1}\backslash y$ and $x\cdot y=y//x^{-1}$ , where "$\backslash$" ("$//$") is the left (right) division in $(Q,\cdot)$ (respectively, in $(Q,\circ)$). Hence, a middle Bol loops are isostrophs of left and right Bol loops.\\

If $(Q,\circ)$ is a middle Bol loop and $(Q,\cdot)$ is the corresponding left Bol loop, then $(Q,\ast)$, where $x\ast y=y\cdot x$, for every $x,y\in Q$, is the corresponding right Bol loop for $(Q,\circ)$. So, $(Q,\cdot)$ is a left Bol loop, $(Q,\ast)$ is a right Bol loop and
$$x\circ y=y(y^{-1} x\cdot y)=[y\ast(x\ast y^{-1})]\ast y,$$
for every $x,y\in Q$.
\end{myrem}

After then, middle Bol loops resurfaced in literature not until 1994 and 1996 when Syrbu \cite{davidref:4,davidref:5} considered them in-relation to the universality of the elasticity law.

In 2003, Kuznetsov \cite{davidref:22}, while studying gyrogroups (a special class of Bol loops) established some algebraic properties of middle Bol loop and designed a method of constructing a middle Bol loop from a gyrogroup. According to Gwaramija \cite{davidref:23}, in a middle Bol loop $(Q,\cdot)$ with identity element $e$, the following are true.
\begin{enumerate}
    \item The left inverse element $x^\lambda$ and the right inverse $x^\rho$ to an element $x\in Q$ coincide : $x^\lambda$=$x^\rho$.
  \item If $(Q,\cdot,e)$ is a left Bol loop and "$/$" is the right inverse operation to the operation $"\cdot "$ , then the operation  $x\circ y=x/y^{-1}$ is a middle Bol loop $(Q,\circ,e)$, and every one middle Bol loop can be obtained in a similar way from some left Bol loop.
\end{enumerate}
These confirm the observations of earlier authors mentioned above.

In 2010, Syrbu \cite{davidref:6} studied the connections between structure and properties of middle Bol loops and of the corresponding left Bol loops. It was noted that two middle Bol loops are isomorphic if and only if the corresponding left (right) Bol loops are isomorphic, and a general form of the autotopisms of middle Bol loops was deduced. Relations between different sets of elements, such as nucleus, left (right,middle) nuclei, the set of Moufang elements, the center, e.t.c. of a middle Bol loop and  left Bol loops were established.

In 2012, Grecu and Syrbu \cite{davidref:65} proved that two middle Bol loops are isotopic if and only if the corresponding right (left) Bol loops are isotopic. They also proved that a middle Bol loop $(Q,\circ)$ is flexible if and only if the corresponding right Bol loop $(Q,\cdot)$ satisfies the identity
$$(yx)^{-1}\cdot \big(x^{-1}\cdot y^{-1}\big)^{-1}x=x.$$

In 2012, Drapal and Shcherbacov \cite{davidref:1} rediscovered the middle Bol identities in a new way.

In 2013,  Syrbu and Grecu \cite{davidref:66n} established a necessary and sufficient condition  for the quotient loops of a  middle Bol loop and of its corresponding right Bol loop to be isomorphic.

In 2014, Grecu and Syrbu \cite{davidref:66} established:
\begin{enumerate}
  \item that the commutant (centrum) of a middle Bol loop is an AIP-subloop and
  \item a necessary and sufficient condition when the commutant is an invariant under the existing isostrophy between middle Bol loop and the corresponding right Bol loop.
\end{enumerate}

In 1994, Syrbu \cite{davidref:4}, while studying loops with universal elasticity $(xy\cdot x=x\cdot yx)$ established a necessary and sufficient condition  $(xy/z)(b\backslash xz)=x(b\backslash[(by/z)(b\backslash xz)])$ for a loop  $(Q,\cdot,\backslash,/)$ to be universally elastic. Furthermore, he constructed some finite examples of loops in which this condition and the middle Bol identity $x(yz\backslash x)=(x/z)(y\backslash x)$ are equivalent, and then posed an open problem of investigating if these two identities are also equivalent in all other finite loops.

In 2012, Drapal and Shcherbacov \cite{davidref:1} reported that Kinyon constructed a non-flexible middle Bol loop of order $16$. This necessitates a reformulation of the Syrbu's open problem. Although the above authors also reported that Kinyon reformulated the Syrbu's open problem as follows: Let $Q$ be a loop such that every isotope of $Q$  is flexible and has the AAIP. Must $Q$ be a middle Bol loop? This study prepares the ground for different reformulation of Syrbu's open problem based on the fact that the algebraic properties and structural properties of middle Bol loops have been studied in the past relative to their corresponding right (left) Bol loop. Our envisioned reformulation of the equivalence of the universal elasticity condition (UEC) and the middle Bol identity (MBI) is by searching for an additional identity (AI) such that UEC = MBI+AI. In this work, we prepare a good ground to reformulate Syrbu's question:
\begin{enumerate}
\item by establishing some new algebraic properties of a middle Bol loop;
\item by investigating the relationship between a middle Bol loop and some inverse property loops like WIPLs, CIPLs, AIPLs, SAIPLs, RIPLs and IPLs;
\item  by establishing necessary and sufficient condition(s) for a middle Bol loop to be a Moufang loop or an extra loop or a group.
\end{enumerate}

\begin{mydef}\label{newdef}
Let $(Q,\cdot )$ be a loop and let $w_1(q_1,q_2,\cdots ,q_n)$ and
$w_2(q_1,q_2,\cdots ,q_n)$ be words in terms of variables
$q_1,q_2,\cdots ,q_n$ of the loop $Q$ with equal lengths $N$($N\in
\mathbb{N}$,~$N>1$) such that the variables $q_1,q_2,\cdots ,q_n$
appear in them in equal number of times. $Q$ is called a {\Large
N}$_{w_1(r_1,r_2,\cdots , r_n)=w_2(r_1,r_2,\cdots
,r_n)}^{m_1,m_2,\cdots ,m_n}$ loop if it obeys the identity
$w_1(q_1,q_2,\cdots ,q_n)=w_2(q_1,q_2,\cdots ,q_n)$ where
$m_1,m_2,\cdots ,m_n\in \mathbb{N}$ represent the number of times
the variables $q_1,q_2,\cdots ,q_n\in Q$ respectively appear in the
word $w_1$ or $w_2$ such that the mappings $q_1\mapsto
r_1,q_2\mapsto r_2,\cdots ,q_n\mapsto r_n$ are assumed,
$r_1,r_2,\cdots r_n\in \mathbb{N}$.
\end{mydef}
\begin{myrem}
The notation in Definition~\ref{newdef} was used in the study of the universality of Osborn loops in Jaiy\'e\d ol\'a and Ad\'en\'iran \cite{osborn1} when {\Large N=4}. In Phillips \cite{davidrefphi}, the case when {\Large N=5} surfaced in the characterization of WIP power-associative conjugacy closed loops with the two identities: LWPC-$(xy\cdot x)(xz)=x((yx\cdot x)z)$ and RWPC-$(zx)(x\cdot yx)=(z(x\cdot xy))x$. Kinyon et. al. \cite{davidrefkin} introduced two classes of loops that generalize Moufang and Steiner loop, namely:
\begin{itemize}
  \item RIF loop-this is an IPL that obeys the identity $(xy)(z\cdot xy)=(x\cdot yz)x\cdot y$; and
  \item WRIF loop-this is  a flexible loop that satisfies the identities $W_1:~(zx)(yxy)=z(xyx)\cdot y$ and $W_2:~(yxy)(xz)=y\cdot (xyx)z$.
\end{itemize}
 They showed that a WRIF loop is a dissociative loop and a RIF is a WRIF loop. It clear that these two loops are described by identities of the type {\Large N=5}.
\end{myrem}
\paragraph{}
In 1968, Fenyves \cite{1} obtained sixty identities of Bol-Moufang
type. These identities have four variables on each side of the
equations in the same order with one element repeating itself. Fenyves \cite{1}, Kinyon and Kunen \cite{44}, and Phillips and
Vojt\v echovsk\'y \cite{38} found some of these identities to be
equivalent to associativity in (loops) and others to
describe extra, Bol, Moufang, central, flexible loops. Some of these
sixty identities are given below following the labelling in Fenyves
\cite{1}.
\begin{multicols}{2}
\begin{description}
\item[$F_{1}$:] $xy\cdot zx=(xy\cdot z)x$ (Associativity)

\item[$F_{2}$:] $xy\cdot zx=(x\cdot yz)x$ (Moufang identity)

\item[$F_{3}$:] $xy\cdot zx=x(y\cdot zx)$ (Associativity)

\item[$F_{4}$:] $xy\cdot zx=x(yz\cdot x)$ (Moufang identity)

\item[$F_{11}$:] $xy\cdot xz=(xy\cdot x)z$ (Associativity)

\item[$F_{12}$:] $xy\cdot xz=(x\cdot yx)z$ (Associativity)

\item[$F_{13}$:] $xy\cdot xz=x(yx\cdot z)$ (extra identity)

\item[$F_{14}$:] $xy\cdot xz=x(y\cdot xz)$ (Associativity)

\item[$F_{21}$:] $yx\cdot zx=(yx\cdot z)x$ (Associativity)

\item[$F_{22}$:] $yx\cdot zx=(y\cdot xz)x$ (extra identity)

\item[$F_{23}$:] $yx\cdot zx=y(xz\cdot x)$ (Associativity)

\item[$F_{24}$:] $yx\cdot zx=y(x\cdot zx)$ (Associativity)

\item[$F_{31}$:] $yx\cdot xz=(yx\cdot x)z$ (Associativity)

\item[$F_{32}$:] $yx\cdot xz=(y\cdot xx)z$ (Associativity)

\item[$F_{33}$:] $yx\cdot xz=y(xx\cdot z)$ (Associativity)

\item[$F_{34}$:] $yx\cdot xz=y(x\cdot xz)$ (Associativity)
\end{description}
\end{multicols}

\section{Main Results}
\begin{mylem}\label{davidlem9}
Let $(Q,\cdot,\backslash,/)$ be a middle Bol loop. Then
\begin{description}
  \item[(a)] $(yz)^\rho = z^\lambda \cdot y^\rho$ and $z^\rho = z^\lambda $ i.e. $(Q,\cdot)$ is an AAIPL.
  \item[(b)] $yx\backslash x = x\backslash(y\backslash x)$.
  \item[(c)] $(yx)u = x \Leftrightarrow y(xu) = x$ and $R_uL_y =I \Leftrightarrow I = L_yR_u$.
  \item[(d)]  $xz\backslash x = x\backslash(x/z)$.
  \item[(e)] $(xz)u = x \Leftrightarrow (xu)z = x$ and $R_zR_u = I \Leftrightarrow R_uR_z = I$.
  \item[(f)] $x(z\backslash x)= (x/z)x$.
  \item[(g)] $xx = (x/z)(z^\lambda\backslash x), xx = (x/ y^\rho)(y\backslash x)$.
  \item[(h)] $|x| = 2 \Leftrightarrow (x/z)^{-1} = z^{-1} \backslash x$. Hence, $(Q,/)\equiv(Q, (\backslash)^*)$.
   \item[(i)] $(x/yz)x = (x/z)(y\backslash x)$.
  \item[(j)] $(Q,\cdot)$ is a CIPL iff $(Q,\cdot)$ is a commutative WIPL iff $(Q,\cdot)$ is a commutative IPL iff $(Q,\cdot)$ is commutative LIPL iff $(Q,\cdot)$ is commutative RIPL. Hence, $(Q,\cdot)$ is a commutative Moufang loop.
  \item[(k)] $(Q,\cdot)$ is a SAIPL iff $(Q,\cdot)$ is flexible.
  \item[(l)] $(Q,\cdot)$ is a AIPL iff $(Q,\cdot)$ is commutative. Hence, $(Q,\cdot)$ is an isostroph of a Bruck loop.
 \item[(m)] The following are equivalent:
 \begin{multicols}{3}
  \begin{enumerate}
   \item $(Q,/)\equiv(Q, \backslash)$.
   \item $x(yx\backslash x)=y(yx\backslash y)$.
   \item $(x/yx)x=y(yx\backslash y)$.
   \item $x(yx\backslash x)=(y/yx)y$.
   \item $(x/yx)x=(y/yx)y$.
 \end{enumerate}
 \end{multicols}
  \end{description}
\end{mylem}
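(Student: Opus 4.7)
The plan is to verify items (a)--(m) by systematic substitutions into the middle Bol identity (MBI) $x(yz\backslash x)=(x/z)(y\backslash x)$, together with two facts already on record: middle Bol loops are power associative (Belousov), and hence $x^\rho=x^\lambda$ in every cyclic subloop. I would prove (a) first, then handle (b)--(i) as short substitution exercises, and finish with the structural statements (j)--(m).

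For (a), setting $x=e$ in MBI yields $(yz)^\lambda=z^\lambda y^\lambda$. Power associativity gives $x^\rho=x^\lambda$, so this becomes the AAIP. Items (b)--(i) each follow from a single substitution. I would set $z=x$ in MBI and left-cancel $x$ for (b); set $y=x$ and left-cancel $x$ for (d); set $y=e$ for (f); set $y=z^\lambda$ (so that $yz=e$ and $yz\backslash x=x$) for the first half of (g), and symmetrically $z=y^\rho$ for the second. For (i), apply (f) with $z\mapsto yz$ to rewrite the left-hand side as $x(yz\backslash x)$ and then invoke MBI directly. The two statements in (c) and (e) are just (b) and (d) read as equalities between translation composites: from (b), $u=yx\backslash x$ amounts to both $(yx)u=x$ and $xu=y\backslash x$, whence $y(xu)=x$; this is the translation equivalence $R_uL_y=I\Leftrightarrow L_yR_u=I$, and similarly for (e).

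For (h), (g) gives $xx=(x/z)(z^{-1}\backslash x)$, so $|x|=2$ is exactly $(x/z)(z^{-1}\backslash x)=e$, i.e.\ $(x/z)^{-1}=z^{-1}\backslash x$; the displayed equivalence of $(Q,/)$ and $(Q,(\backslash)^\ast)$ follows. For (k) and (l) I use (a). In any AAIPL, $(xy\cdot x)^{-1}=x^{-1}(xy)^{-1}=x^{-1}(y^{-1}x^{-1})$; SAIP insists this equal $(x^{-1}y^{-1})x^{-1}$, which is precisely flexibility after renaming $u=x^{-1}$, $v=y^{-1}$. Likewise AIP says $(xy)^{-1}=x^{-1}y^{-1}$, and combined with AAIP $(xy)^{-1}=y^{-1}x^{-1}$ this forces $x^{-1}y^{-1}=y^{-1}x^{-1}$, i.e.\ commutativity; the converse is immediate. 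The ``isostroph of a Bruck loop'' clause uses Theorem~\ref{davidthm1}: a commutative middle Bol loop corresponds to a commutative (left) Bol loop, which is a Bruck loop. For (j), the same AAIP calculation shows that in any AAIPL, WIP, LIP, and RIP are mutually equivalent; CIP adds commutativity together with IP; and a commutative middle Bol loop with IP is diassociative, hence a commutative Moufang loop.

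For (m), parts (2)--(5) are linked by (f): with $z=yx$, (f) gives $x(yx\backslash x)=(x/yx)x$, and with $(x,z)\mapsto(y,yx)$ it gives $y(yx\backslash y)=(y/yx)y$. Either substitution on either side of (2) produces one of (3), (4), (5), so all four are equivalent. To connect with (1), apply MBI with $z=x$ to obtain $x(yx\backslash x)=(x/x)(y\backslash x)=y\backslash x$, and apply MBI again to get $y(yx\backslash y)=(y/x)(y\backslash y)=y/x$. Thus (2) reads $y\backslash x=y/x$, which is precisely $(Q,/)\equiv(Q,\backslash)$. The main obstacle I foresee is the bookkeeping in (j), where one has to track which implications rest on AAIP alone, which require power associativity, and which need commutativity; every other item reduces to one well-chosen substitution into MBI.
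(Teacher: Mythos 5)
Your proposal follows essentially the same route as the paper: every item is obtained by the same substitutions into the middle Bol identity ($x=e$ for (a), $z=x$ for (b), $y=x$ for (d), $y=e$ for (f), $y=z^\lambda$ and $z=y^\rho$ for (g), (f) applied to $yz$ for (i)), the same translation-map reading of (b) and (d) to get (c) and (e), and the same AAIP manipulations for (j)--(l) and comparison of $y\backslash x$ with $y/x$ for (m). The only divergence is in (a): the substitution $x=e$ actually yields $(yz)^\rho=z^\lambda\cdot y^\rho$ (not $(yz)^\lambda=z^\lambda y^\lambda$ as you wrote, since $y\backslash e=y^\rho$ and $yz\backslash e=(yz)^\rho$), and the paper then obtains $z^\rho=z^\lambda$ by the further substitution $y=e$ in that identity rather than by appealing to Belousov's power-associativity theorem --- your route still works, but the paper's is self-contained and gives the stated formula directly.
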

\begin{proof}
\begin{description}
\item[(a)] Since $(Q,\cdot,\backslash,/)$ is a middle Bol loop, then
 \begin{equation}\label{davideq13.1}
x(yz\backslash x) = (x/z)(y\backslash x).
\end{equation}
Let $x=e$, then, $e(yz\backslash e)= (e/z)(y\backslash e)$.
Let $yz\backslash e = u$, then $e = yz\cdot u\Longrightarrow u=(yz)^\rho$. Let $e/z=v$, then $e=v\cdot z \Rightarrow$ $v = z^\lambda$ and let $y\backslash e= w$, then $e= y\cdot w\Longrightarrow w=y^\rho$. So $(yz\backslash e)=(e/z)(y\backslash e),\Longrightarrow (yz)^\rho=z^\lambda\cdot y^\rho$.
Let $y = e$, then $(ez)^\rho = z^\lambda \cdot e^\rho$ implies $z^\rho = z^\lambda.$
\item[(b)] Put $z = x$ in \eqref{davideq13.1}, then $x(yx\backslash x) = (x/x)(y\backslash x)= e(y\backslash x)\Rightarrow x(yx\backslash x) = y\backslash x$. Thus, $(yx)\backslash x = x\backslash(y\backslash x).$
\item[(c)] From (b), let $u = (yx)\backslash x \Leftrightarrow (yx)\cdot u = x\Leftrightarrow (yx)u = x.$
Let $x\backslash (y\backslash x) = u\Leftrightarrow y\backslash x = xu\Leftrightarrow x = y(x\cdot u).$
  Then, $y(xu)=x \Leftrightarrow  R_uL_y = I.$ Also, $(yx)u=x \Leftrightarrow L_yR_u = I.$
Therefore, $(yx)u = x\Leftrightarrow y(xu) = x$ and  $  R_uL_y = I \Leftrightarrow L_yR_u = I.$
\item[(d)] Put $y=x$ in \eqref{davideq13.1}, then $x(xz\backslash x)= (x/z)(x\backslash x)= (x/z)e$ and $x(xz\backslash x)= x/z.$
 Therefore, $ xz\backslash x=x\backslash (x/z)$.
 \item[(e)]From (d), let $u= xz\backslash x \Leftrightarrow (xz)u= x$ and let $u= x\backslash(x/z)\Leftrightarrow xu = x/z \Leftrightarrow (xu)z = x,$
that is $(xz)u = x \Leftrightarrow (xu)z = x.$
Then, $R_zR_u =I \Leftrightarrow R_uR_z = I.$
Therefore, $R_zR_u=I\Leftrightarrow R_uR_z=I.$
\item[(f)] Put $y=e$ in \eqref{davideq13.1}, then $x(ez\backslash x)=(x/z)(e\backslash x)\Rightarrow x(z\backslash x)=(x/z)x.$
\item[(g)] Put $y=z^\lambda$ in \eqref{davideq13.1}, then $x(z^\lambda z\backslash x)=(x/z)(z^\lambda\backslash x)\Rightarrow x(e\backslash x)=(x/z)(z^\lambda\backslash x).$
\item[(h)] Also, put $z=y^\rho$ in \eqref{davideq13.1}, then $x(yy^\rho\backslash x)=(x/y^\rho)(y\backslash x)\Rightarrow x(e\backslash x)=(x/y^\rho)(y\backslash x)$. This implies that $xx=(x/y^\rho)(y\backslash x).$ So, $(x/z)(z^\lambda\backslash x)=(x/y^\rho)(y\backslash x).$
\item[(i)]Assuming $x^2=e$, then $e=(x/y^\rho)(y\backslash x)\Rightarrow e/(y\backslash x)=x/y^\rho\Rightarrow (y\backslash x)^\lambda= x/y^\rho$, implies $(y\backslash x)^{-1}=x/y^{-1}$. Likewise, assuming that $x^2=e$, then $e=(x/z)(z^\lambda\backslash x)\Rightarrow (x/z)\backslash e=(z^\lambda\backslash x)$, therefore, $(x/z)^\rho= z^\lambda\backslash x$, that is, $(x/z)^{-1}=z^{-1}\backslash x \Rightarrow x/z=z\backslash x.$
$|x|=2\Leftrightarrow (x/z)^{-1}=z^{-1}\backslash x$ and $x/z=z\backslash x\Leftrightarrow x/z=x(\backslash)^*z\Rightarrow (Q,/)\equiv(Q,(\backslash)^*).$  \item[(j)] Again, in a MBL, CIP$\Leftrightarrow$ WIP and AIP $\Leftrightarrow$ IP and $(xy)^{-1}=x^{-1}y^{-1}\Leftrightarrow$ IP and $y^{-1}x^{-1}=x^{-1}y^{-1}\Leftrightarrow $ IP and $(y^{-1})^{-1}(x^{-1})^{-1}=(x^{-1})^{-1}(y^{-1})^{-1}\Leftrightarrow$ IP and $yx=xy\Leftrightarrow $IP and commutativity.
\item[(k)] SAIPL $\Leftrightarrow ((xy)\cdot x)^\rho= (x^\rho\cdot y^\rho)x^\rho\Leftrightarrow x^{-1}(xy)^{-1}=(x^{-1}\cdot y^{-1})x^{-1}\Leftrightarrow x^{-1}\cdot y^{-1}x^{-1}= (x^{-1}\cdot y^{-1})x^{-1}\Leftrightarrow (x^{-1})^{-1}\cdot (y^{-1})^{-1}(x^{-1})^{-1}=((x^{-1})^{-1}\cdot (y^{-1})^{-1})(x^{-1})^{-1}\Leftrightarrow x\cdot yx=(x\cdot y)x\Leftrightarrow$ flexibility.
\item[(l)] $xy=yx\Leftrightarrow (xy)^{-1}=(yx)^{-1}\Leftrightarrow (xy)^{-1}=x^{-1}y^{-1}\Leftrightarrow$ AIPL.
\item[(m)] From (b) and (d), $y\backslash x=x(yx\backslash x)$ and $x/z=x(xz\backslash x)$. Thus, $(Q,/)\equiv(Q, \backslash)$ if and only if $x(yx\backslash x)=y(yx\backslash y)$. The equivalence to the others follow from (f).
\end{description}
\end{proof}

\begin{mylem}\label{davidlem10}
Let $(Q,\cdot, \backslash,/)$ be a middle Bol loop. Let $f_1,g_1:Q^2\rightarrow Q$. Then:
\begin{enumerate}
  \item $f_1(x,y)=yx\backslash x\Leftrightarrow f_1(x,y)=x\backslash(y\backslash x)$;
    \begin{multicols}{3}
      \begin{enumerate}
    \item $f_1(x,e)=e$.
    \item $f_1(x^{-1},e)= e$.
    \item $f_1(e,e)= e$.
    \item $f_1(e,x)=x^{-1}$.
    \item $f_1(x,x)=x^{-1}$.
    \item $f_1(x^{-1}, x)= x^{-1}$.
    \item $f_1(e,x^{-1})=x$.
    \item $f_1(x, x^{-1})= x$.
  \end{enumerate}
  \end{multicols}
   \item $g_1(x,y)=xy\backslash x\Leftrightarrow g_1(x,y)=x\backslash (x/y)$;
  \begin{multicols}{3}
  \begin{enumerate}
    \item $g_1(x,e)=e$.
    \item $g_1(x^{-1},e)= e$.
    \item $g_1(e,e)= e$.
    \item $g_1(e,x)=x^{-1}$.
    \item $g_1(x,x)=x^{-1}$.
    \item $g_1(x^{-1}, x)= x^{-1}$.
    \item $g_1(e,x^{-1})=x$.
    \item $g_1(x, x^{-1})= x$.
  \end{enumerate}
  \end{multicols}
   \item $f_1(x,y)=g_1(x,y)\Leftrightarrow (Q,\cdot)$ is commutative.
  \item $f_1(x,y)=g_1(x,y)\Leftrightarrow (Q,(\backslash)^*)\equiv(Q,/)\Leftrightarrow (Q,\backslash)\equiv(Q,(/)^*)$.
  \item $f_1(x,y)=g_1(x,y)\Leftrightarrow yx\backslash x=x\backslash (x/y)\Leftrightarrow xy\backslash x=x\backslash (y\backslash x)$.
 \item $x=y\cdot (x/y)\Leftrightarrow(y\backslash x)\cdot y=x$
\end{enumerate}
\end{mylem}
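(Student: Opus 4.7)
The plan is to derive all six parts by direct unpacking of the definitions of $f_1$ and $g_1$, using the identities already established in Lemma~\ref{davidlem9}, chiefly parts (a), (b) and (d), together with the power-associativity of middle Bol loops.

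For part 1, the equivalence $yx\backslash x = x\backslash(y\backslash x)$ is exactly Lemma~\ref{davidlem9}(b). The eight particular values of $f_1$ are then obtained by substituting the indicated arguments into $yx\backslash x$; each reduces to one of the standard loop identities $x\backslash x = e$, $e\backslash a = a$, or $x\backslash e = x^\rho = x^{-1}$ (using $x^\rho = x^\lambda$ from Lemma~\ref{davidlem9}(a)). The only evaluation requiring slightly more than a loop axiom is $f_1(x,x) = xx\backslash x$, which equals $x^{-1}$ because $x^2 \cdot x^{-1} = x$ by the power-associativity of middle Bol loops. Part 2 is proved identically, replacing Lemma~\ref{davidlem9}(b) by Lemma~\ref{davidlem9}(d); the computations of the eight values of $g_1$ are the mirror image of those for $f_1$.

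For part 3, $f_1(x,y) = g_1(x,y)$ is by definition $yx\backslash x = xy\backslash x$; calling the common value $z$, the defining equations $(yx)z = x$ and $(xy)z = x$ force $yx = xy$ by right cancellation, and the converse is trivial. For part 4, substituting the alternative forms from parts 1 and 2 turns $f_1 = g_1$ into $x\backslash(y\backslash x) = x\backslash(x/y)$; left-cancellation of $x$ gives $y\backslash x = x/y$, which is precisely the isostrophy $(Q,(\backslash)^*) \equiv (Q,/)$, and the second formulation $(Q,\backslash) \equiv (Q,(/)^*)$ follows by reinterpreting the same equation $y\backslash x = x/y$ with the roles of the two arguments exchanged. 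Part 5 is immediate from parts 1 and 2 by substituting one expression for $f_1$ and the other for $g_1$ into the equation $f_1(x,y) = g_1(x,y)$. Finally, for part 6, both of the equations $x = y\cdot(x/y)$ and $(y\backslash x)\cdot y = x$ are, by the defining properties of left and right division respectively, equivalent to the single equation $x/y = y\backslash x$, hence to each other.

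The bulk of the argument is routine bookkeeping; the only delicate point is keeping track of the opposite operations and isostrophies in part 4, where one must be careful to distinguish $(\backslash)^*$ from $\backslash$. Once the identifications from Lemma~\ref{davidlem9}(b) and (d) are in hand, each step is a one-line verification, so I do not anticipate a genuine obstacle beyond notational care.
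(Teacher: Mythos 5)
Your proposal is correct and follows essentially the same route as the paper: parts 1 and 2 are read off from Lemma~\ref{davidlem9}(b) and (d), parts 3--5 follow by cancellation and by mixing the two forms of $f_1$ and $g_1$, and part 6 reduces both equations to $y\backslash x=x/y$ via the definitions of the divisions. The only difference is that you explicitly verify the eight special values (correctly invoking power associativity for $xx\backslash x=x^{-1}$ and $x^\rho=x^\lambda$ from Lemma~\ref{davidlem9}(a)), which the paper's proof leaves unstated.
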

\begin{proof}
\begin{enumerate}
  \item From Lemma~\ref{davidlem9}(b), $yx\backslash x= x\backslash(y\backslash x)$. So, $f_1(x,y)=yx\backslash x\Leftrightarrow f_1(x,y)=x\backslash(y\backslash x)$.
   \item From Lemma~\ref{davidlem9}(d), $xz\backslash x = x\backslash(x/z)$. So, $g_1(x,z)=xz\backslash x\Leftrightarrow g_1(x,z)=x\backslash(x/z)$.
     \item Since $f_1(x,y)=yx\backslash x\Leftrightarrow f_1(x,y)=x\backslash (y\backslash x)$ and $g_1(x,y)=xy\backslash x\Leftrightarrow g_1(x,y)=x\backslash (x/y)$,
      then, $f_1(x,y)=g_1(x,y)\Leftrightarrow yx\backslash x=xy\backslash x\Leftrightarrow yx=xy\Leftrightarrow (Q,\cdot)$ is commutative.
     \item $f_1(x,y)=g_1(x,y)\Leftrightarrow x\backslash (y\backslash x)=x\backslash(x/y)\Leftrightarrow y\backslash x=x/y\Leftrightarrow y\backslash x=x/y\Leftrightarrow y\backslash x=y(/)^* x\iff x(\backslash)^*y=x/y\Leftrightarrow (Q,(\backslash)^*)\equiv (Q,/)\Leftrightarrow (Q,\backslash)\equiv(Q,(/)^*).$
       \item  $f_1(x,y)=g_1(x,y)\Leftrightarrow xy\backslash x=x\backslash(x/y)$ and $yx\backslash x=x\backslash(y\backslash x).$ By equating the LHS, we have $xy\backslash x=yx\backslash x\iff x\backslash (x/y)=x\backslash(y\backslash x)\iff x/y=y\backslash x$. $$\therefore f_1(x,y)=g_1(x,y)\iff xy\backslash x=x\backslash(x/y)\iff yx\backslash x=x\backslash(y\backslash x).$$
      \item Therefore, $y\backslash x=x/y\Leftrightarrow x=y\cdot (x/y)$ or $(y\backslash x)\cdot y=x$.
  \end{enumerate}
\end{proof}

\begin{myth}\label{davidlem10.1}
Let $(Q,\cdot,\backslash,/)$ be a middle Bol loop and let $f_1,g_1:Q^2\rightarrow Q$ and $\alpha_i,\beta_i:Q^i\rightarrow Q$ be defined as:
\begin{gather*}
f_1(x,y)=yx\backslash x~\textrm{or}~f_1(x,y)=x\backslash(y\backslash x)~\textrm{and}~g_1(x,y)=xy\backslash x~\textrm{or}~ g_1(x,y)=x\backslash (x/y),\\
\alpha_i(x_1,x_2,\ldots,x_i)=(\ldots(((x_1x_2)x_3)x_4)\ldots x_{i-1})x_i~\textrm{and}\\
\beta_i(x_1,x_2,\ldots,x_i)=x_1\backslash (x_2\backslash (x_3\backslash (\cdots x_{i-2}\backslash (x_{i-1}\backslash x_i)\cdots)))~\forall~i\in\mathbb{N}.
\end{gather*}
The following are true.
\begin{enumerate}
  \item $f_1\big(x,\alpha_n(y,x,x,\ldots ,x)\big)=\beta_n\big(x,x,\ldots,x,f_1(x,y)\big)$.
  \item $f_1\big(x,\alpha_{n+1}(x,y,x,x,\ldots ,x)\big)=\beta_{n+1}\big(x,x,x,\ldots,x,g_1(x,y)\big)$.
  \item $(Q,\cdot)$ has the RAP if and only if $f_1(x,y)=x[(yx^2)\backslash x]$.
  \item $(Q,\cdot)$ has the PRAP if and only if $yx^n\cdot\beta_n\big(x,x,\ldots,x,f_1(x,y)\big)=x$.
   \item If $(Q,\cdot)$ has the RAP, then $(Q,\cdot)$ is of exponent $2$ if and only if $f_1(x,y)=x(y\backslash x)$.
   \item If $(Q,\cdot)$ has the PRAP, then $(Q,\cdot)$ is of exponent $n$ if and only if

   $y\cdot\beta_n\big(x,x,\ldots,x,f_1(x,y)\big)=x$.
\end{enumerate}
\end{myth}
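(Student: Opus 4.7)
The whole theorem rests on Lemma~\ref{davidlem9}(b), namely $yx\backslash x=x\backslash(y\backslash x)$, together with the alternative descriptions of $f_1$ and $g_1$ given in Lemma~\ref{davidlem10}. I would first prove, by induction on $m\in\mathbb{N}$, the auxiliary identity
\begin{equation*}
\alpha_m(y,x,x,\ldots,x)\backslash x=\beta_m\bigl(x,x,\ldots,x,\,y\backslash x\bigr),
\end{equation*}
whose base case is tautological and whose inductive step is one application of Lemma~\ref{davidlem9}(b): $(\alpha_m(y,x,\ldots,x)\cdot x)\backslash x=x\backslash(\alpha_m(y,x,\ldots,x)\backslash x)$.

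\textbf{(1) and (2).} For part~(1), apply the definition $f_1(x,w)=wx\backslash x$ with $w=\alpha_n(y,x,\ldots,x)$; this gives $f_1(x,\alpha_n(y,x,\ldots,x))=\alpha_{n+1}(y,x,\ldots,x)\backslash x$. The auxiliary identity then converts this into $\beta_{n+1}(x,\ldots,x,y\backslash x)$, and since $f_1(x,y)=x\backslash(y\backslash x)$ this last expression is exactly $\beta_n(x,\ldots,x,f_1(x,y))$ (absorbing one $x$-slot into the formula for $f_1$). Part~(2) is the mirror computation: regrouping $\alpha_{n+2}(x,y,x,\ldots,x)=\alpha_{n+1}(xy,x,\ldots,x)$, applying the auxiliary identity with $y$ replaced by $xy$, and using $g_1(x,y)=(xy)\backslash x$ in place of $y\backslash x$. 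The only real obstacle here is bookkeeping of the indices of $\alpha_\ast$ and $\beta_\ast$.

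\textbf{(3) and (4).} Part~(3) is a short chain of equivalences: RAP is $(yx)x=yx^2$; applying $\backslash x$ to both sides and invoking Lemma~\ref{davidlem9}(b) gives $x\backslash(yx\backslash x)=(yx^2)\backslash x$, and then multiplying on the left by $x$ yields $f_1(x,y)=x[(yx^2)\backslash x]$. Each step is reversible using right cancellation in the loop. For part~(4), part~(1) combined with $f_1(x,w)=wx\backslash x$ gives the \emph{unconditional} identity $\alpha_{n+1}(y,x,\ldots,x)\cdot\beta_n(x,\ldots,x,f_1(x,y))=x$. Since PRAP is precisely the statement $\alpha_{n+1}(y,x,\ldots,x)=yx^n$, the forward implication is immediate by substitution and the converse follows from right cancellation against the common factor $\beta_n(x,\ldots,x,f_1(x,y))$.

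\textbf{(5) and (6).} These are routine consequences of (3) and (4). For~(5), assume RAP and use (3): if $x^2=e$ then $yx^2=y$, so $(yx^2)\backslash x=y\backslash x$ and $f_1(x,y)=x(y\backslash x)$; conversely, equating $x(y\backslash x)$ with $x[(yx^2)\backslash x]$ and applying the cancellation laws of the loop (left cancellation against $x$, then specializing to obtain $y=yx^2$, then left cancellation against $y$) yields $x^2=e$. Part~(6) is the same argument applied to (4) with $x^n$ in place of $x^2$. No conceptual obstacle arises here; the only subtlety is the correct use of left/right cancellation in a loop (as opposed to a group).

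In summary, all six items unravel once the auxiliary identity $\alpha_m(y,x,\ldots,x)\backslash x=\beta_m(x,\ldots,x,y\backslash x)$ is in hand, and that identity is essentially a repeated application of Lemma~\ref{davidlem9}(b); the bulk of the work is careful index bookkeeping rather than any deep computation.
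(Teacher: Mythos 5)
Your proposal is correct and follows essentially the same route as the paper: the paper encodes your auxiliary identity $\alpha_m(y,x,\ldots,x)\backslash x=\beta_m(x,\ldots,x,y\backslash x)$ as the translation-map identity $R_x^{n}=\mathcal{R}_x\mathcal{L}_x^{n}\mathcal{R}_x^{-1}$ obtained by iterating Lemma~\ref{davidlem9}(b) (and its companion from Lemma~\ref{davidlem9}(d) for part~2), and then derives parts 3--6 exactly as you do. Your element-wise induction is just that operator computation in different notation (and is, if anything, slightly more explicit than the paper's ``we claim that'' for general $n$).
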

\begin{proof}
\begin{enumerate}
  \item By Lemma~\ref{davidlem9}(b),
$yx\backslash x=x\backslash(y\backslash x)\Rightarrow (yx)\mathcal{R}_x=(y\backslash x)\mathcal{L}_x\Rightarrow R_x\mathcal{R}_x=\mathcal{R}_x\mathcal{L}_x\Rightarrow $
\begin{equation}\label{davideq12.1}
R_x=\mathcal{R}_x\mathcal{L}_x\mathcal{R}^{-1}_x
\end{equation}
By equation~\eqref{davideq12.1}
$$R^2_x= R_xR_x= \mathcal{R}_x\mathcal{L}_x\mathcal{R}^{-1}_x\mathcal{R}_x\mathcal{L}_x\mathcal{R}^{-1}_x
=\mathcal{R}_x\mathcal{L}^2_x\mathcal{R}^{-1}_x,$$
$$R^3_x={R^2}_xR_x= \mathcal{R}_x\mathcal{L}^2_x\mathcal{R}^{-1}_x\mathcal{R}_x\mathcal{L}_x\mathcal{R}^{-1}_x=\mathcal{R}_x\mathcal{L}^3_x\mathcal{R}^{-1}_x,$$
$$R^4_x=\mathcal{R}_x\mathcal{L}^3_x\mathcal{R}^{-1} \mathcal{R}_x\mathcal{L}_x\mathcal{R}^{-1}_x=\mathcal{R}_x\mathcal{L}^4_x\mathcal{R}^{-1}_x.$$
Therefore, we claim that: $R^n_x=\mathcal{R}_x\mathcal{L}^{n-1}_x\mathcal{R}^{-1}_x \mathcal{R}_x\mathcal{L}_x\mathcal{R}^{-1}_x=\mathcal{R}_x\mathcal{L}^n_x\mathcal{R}^{-1}_x, ~n\geq 0$. Thus, for all $y\in Q$,
\begin{equation}\label{davideq17af}
(\cdots((y\underbrace{x\cdot x)x\cdot x)x\cdots)x}_{\textrm{$n$-times}}\backslash x=
(\underbrace{x\backslash\cdots(x\backslash(x}_{\textrm{$(n-1)$-times}}\backslash(y\backslash x)))\cdots)
\end{equation}
Equation~\eqref{davideq17af} implies that $f_1\big(x,\alpha_n(y,x,x,\ldots ,x)\big)=\beta_n\big(x,x,\ldots,x,f_1(x,y)\big)$.
  \item By Lemma~\ref{davidlem9}(d), $xz\backslash x=x\backslash(x/z)\Rightarrow (xz)\mathcal{R}_x=(x/z)\mathcal{L}_x\Rightarrow zL_x\mathcal{R}_x=z\mathbb{L}_x\mathcal{L}_x\Rightarrow L_x\mathcal{R}_x=\mathbb{L}_x\mathcal{L}_x\Rightarrow$
\begin{equation}\label{davideq13}
L_x=\mathbb{L}_x\mathcal{L}_x\mathcal{R}^{-1}_x
\end{equation}
By equation~\eqref{davideq12.1} and equation~\eqref{davideq13},
$$L_xR_x=\mathbb{L}_x\mathcal{L}_x\mathcal{R}^{-1}_x\mathcal{R}_x\mathcal{L}_x\mathcal{R}^{-1}_x=
\mathbb{L}_x\mathcal{L}_x\mathcal{L}_x\mathcal{R}^{-1}_x=\mathbb{L}_x\mathcal{L}^2_x\mathcal{R}^{-1}_x.$$
$$L_xR^2_x=\mathbb{L}_x\mathcal{L}^2_x\mathcal{R}^{-1}_x\mathcal{R}_x\mathcal{L}_x\mathcal{R}^{-1}_x=\mathbb{L}_x\mathcal{L}^3_x\mathcal{R}^{-1}_x.$$
$$L_xR^3_x=\mathbb{L}_x\mathcal{L}^3_x\mathcal{R}^{-1}_x\mathcal{R}_x\mathcal{L}_x\mathcal{R}^{-1}_x=\mathbb{L}_x\mathcal{L}^4_x\mathcal{R}^{-1}_x.$$
Therefore, $L_xR^n_x= \mathbb{L}_x\mathcal{L}^{(n+1)}_x\mathcal{R}^{-1}_x, ~n\geq 1$. Thus, for all $y\in Q$,
\begin{equation}\label{davideq18af}
(\cdots((xy\cdot\underbrace{ x)x\cdot x)x\cdots)x}_{\textrm{$n$-times}}\backslash x=
(\underbrace{x\backslash\cdots(x\backslash(x}_{\textrm{$(n+1)$-times}}\backslash(x/y)))\cdots)
\end{equation}
Equation~\eqref{davideq18af} implies that $f_1\big(x,\alpha_{n+1}(x,y,x,x,\ldots ,x)\big)=\beta_{n+1}\big(x,x,x,\ldots,x,g_1(x,y)\big)$.
  \item This follows from 1. when $n=2$.
  \item This follows from 1.
  \item This follows from 3.
  \item This follows from 4.
\end{enumerate}
\end{proof}

\begin{mylem}\label{davidlem11}
Let $(Q,\cdot,\backslash,/)$ be a loop. The following are equivalent.
\begin{enumerate}
  \item $(Q,\cdot,\backslash,/)$ be a middle Bol loop.
  \item $x(yz\backslash x) = (x/z)(y\backslash x)$ for all $x,y,z\in Q$.
  \item $(x/yz)x = (x/z)(y\backslash x)$ for all $x,y,z\in Q$.
\end{enumerate}
\end{mylem}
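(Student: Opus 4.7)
The equivalence $(1)\Leftrightarrow(2)$ is immediate from the definition of a middle Bol loop given by identity~\eqref{davideq2.1}, so the substantive content lies in the equivalence $(2)\Leftrightarrow(3)$. The plan is to bridge (2) and (3) through the single-variable relation $x(w\backslash x)=(x/w)x$, which is Lemma~\ref{davidlem9}(f) in the middle Bol setting. Since the present lemma is meant to establish an equivalence from scratch, I would re-derive this relation from each of (2) and (3) independently rather than appeal to Lemma~\ref{davidlem9}(f) as a black box, since that lemma was itself proved assuming (2).

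For $(2)\Rightarrow(3)$, I would first set $y=e$ in (2); because $e\backslash x=x$ and $ez=z$, this collapses to $x(z\backslash x)=(x/z)x$. Replacing $z$ by the single element $yz\in Q$ in this specialization then gives $x(yz\backslash x)=(x/yz)x$, and substituting into the left-hand side of (2) yields exactly (3).

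For $(3)\Rightarrow(2)$, I would instead set $z=e$ in (3); since $x/e=x$ and $ye=y$, this collapses to $(x/y)x=x(y\backslash x)$, which is again the same single-variable relation (with $y$ playing the role of $w$). Replacing $y$ by $yz$ and substituting into the left-hand side of (3) produces (2).

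The argument consists entirely of two specializations of free variables in the ambient universally quantified identities, followed by a resubstitution; no cancellations or nontrivial loop-arithmetic are required. Consequently there is no genuine obstacle. The only point to watch is that one must avoid a circular reliance on Lemma~\ref{davidlem9}(f), and this is trivially accomplished since the relation $x(w\backslash x)=(x/w)x$ falls out of either (2) or (3) in a single line.
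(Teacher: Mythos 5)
Your proof is correct and follows essentially the same route as the paper: both bridge (2) and (3) through the identity $x(w\backslash x)=(x/w)x$, obtained by setting $y=e$ in (2) (this is Lemma~\ref{davidlem9}(f)) and by setting $z=e$ in (3), then applied with $w=yz$. Your version merely spells out the specializations more explicitly and notes the non-circularity, which the paper leaves implicit.
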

\begin{proof}
From Lemma~\ref{davidlem9}(f), $x(z\backslash x)= (x/z)x$. On another hand, if $(x/yz)x = (x/z)(y\backslash x)$ is true, then $x(y\backslash x)= (x/y)x$.
So, 1., 2. and 3. are equivalent.
\end{proof}

\begin{myth}\label{davidlem12}
Let $(Q,\cdot,\backslash,/)$ be a middle Bol loop and let $f_1,g_1,f_2,g_2:Q^2\rightarrow Q$ be defined as:
\begin{gather*}
f_1(x,y)=yx\backslash x~\textrm{or}~f_1(x,y)=x\backslash(y\backslash x)~\textrm{and}~g_1(x,y)=xy\backslash x~\textrm{or}~ g_1(x,y)=x\backslash (x/y),\\
f_2(x,y)=x/(xy)~\textrm{or}~f_2(x,y)=(x/y)/x~\textrm{and}~g_2(x,y)=x/(yx)~\textrm{or}~ g_2(x,y)=(y\backslash x)/x.
\end{gather*}
Then:
\begin{description}
  \item[(a)] $x/yx=(y\backslash x)/x$.
  \item[(b)] $z(yx) = x \Leftrightarrow y(zx) = x$ and $L_yL_z = I \Leftrightarrow L_zL_y = I$.
  \item[(c)] $x/(xz) = (x/z)/x$.
    \item[(d)] $(yx)u = x \Leftrightarrow y(xu) = x$ and $R_uL_y =I \Leftrightarrow I = L_yR_u$.
  \item[(e)] $f_2(x,y)=x/(xy)\Leftrightarrow f_2(x,y)=(x/y)/x$.
  \item[(f)]  $g_2(x,y)=x/(yx)\Leftrightarrow g_2(x,y)=(y\backslash x)/x$.
     \item[(g)]  The following are equivalent:
     \begin{multicols}{3}
  \begin{enumerate}
   \item $(Q,/)\equiv(Q, \backslash)$.
   \item $[x/(xy)]x=[y/(xy)]y$.
   \item $x[(xy)\backslash x]=[y/(xy)]y$.
   \item $[x/(xy)]x=y[(xy)\backslash y]$.
   \item $x[(xy)\backslash x]=y[(xy)\backslash y]$.
 \end{enumerate}
 \end{multicols}
       \item[(i)] $yx\cdot z =x\Leftrightarrow xz = [x/(yx)]x\Leftrightarrow y\cdot xz =x$.
  \item[(j)] $(Q,\cdot )$ is a CIPL if and only if $xy^{-1}= [x/(yx)]x$.
  \item[(k)] $yx\cdot z =x\Leftrightarrow xz = g_2(x,y)\cdot x\Leftrightarrow y\cdot xz =x$.
\item[(l)] $(Q,\cdot )$ is a CIPL if and only if $xy^{-1}= g_2(x,y)\cdot x$.
 \item[(m)] $z\cdot xy =x\Leftrightarrow zx = x[(xy)\backslash x]\Leftrightarrow zx\cdot y =x$.
\item[(n)] $(Q,\cdot )$ is a CIPL if and only if $y^{-1}x= x[(xy)\backslash x]$.
\item[(o)] $z\cdot xy =x\Leftrightarrow zx = x\cdot g_1(x,y)$.
\item[(p)] $(Q,\cdot )$ is a CIPL if and only if $y^{-1}x= x\cdot g_1(x,y)$.
\item[(q)]  $z\cdot yx =x\Leftrightarrow zx = x[(yx)\backslash x]\Leftrightarrow y\cdot zx =x$.
\item[(r)] $(Q,\cdot )$ is a LIPL if and only if $y^{-1}x= x[(yx)\backslash x]$.
\item[(s)] $z\cdot yx =x\Leftrightarrow zx = x\cdot f_1(x,y)$.
\item[(t)] $(Q,\cdot )$ is a LIPL if and only if $y^{-1}x= x\cdot f_1(x,y)$.
\item[(u)] $xy\cdot z =x\Leftrightarrow xz = [x/(xy)]x$.
\item[(v)] $(Q,\cdot )$ is a RIPL if and only if $xy^{-1}= [x/(xy)]x$.
\item[(w)] $xy\cdot z =x\Leftrightarrow xz = f_2(x,y)\cdot x$.
\item[(x)] $(Q,\cdot )$ is a RIPL if and only if $xy^{-1}= f_2(x,y)\cdot x$.
\end{description}
\end{myth}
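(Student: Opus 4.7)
The plan is to notice that, despite its length, the theorem is essentially a catalogue of consequences of Lemma~\ref{davidlem9}, organised around four master identities: $(x/yz)x=(x/z)(y\backslash x)$ (Lemma~\ref{davidlem9}(i)), $x(z\backslash x)=(x/z)x$ (Lemma~\ref{davidlem9}(f)), $yx\backslash x=x\backslash(y\backslash x)$ (Lemma~\ref{davidlem9}(b)) and $xz\backslash x=x\backslash(x/z)$ (Lemma~\ref{davidlem9}(d)). Every clause will reduce, after a specific substitution followed by a translation-of-quasigroup-equation argument, to one of these.

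For parts (a) and (c), I specialise Lemma~\ref{davidlem9}(i) by setting $z=x$ and $y=x$ respectively; in each case one factor on the right collapses to the identity, yielding $x/(yx)=(y\backslash x)/x$ and $x/(xz)=(x/z)/x$. Parts (b) and (d) then follow by reading these identities as equivalences of quasigroup equations: $z(yx)=x\Leftrightarrow z=x/(yx)\Leftrightarrow zx=y\backslash x\Leftrightarrow y(zx)=x$, and (d) is literally Lemma~\ref{davidlem9}(c) restated. Parts (e) and (f) are then the definitional two-form equivalences built into $f_2$ and $g_2$, powered by (c) and (a).

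For part (g), I will reduce every side of the four candidate equalities to either $x/y$ or $x\backslash y$. Using Lemma~\ref{davidlem9}(b),(d) with the two variables swapped gives $(xy)\backslash x=x\backslash(x/y)$ and $(xy)\backslash y=y\backslash(x\backslash y)$; using (c) and (a) of the present theorem gives $x/(xy)=(x/y)/x$ and $y/(xy)=(x\backslash y)/y$. Multiplying by the outer $x$ or $y$ on each side collapses statements 2--5 uniformly to $x/y=x\backslash y$, which is the meaning of 1.

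For the block (i)--(x), the method is uniform. Each of the six base equations ($yx\cdot z=x$, $y\cdot xz=x$, $z\cdot xy=x$, $zx\cdot y=x$, $z\cdot yx=x$, $xy\cdot z=x$) is solved for $z$ by a single division, and the resulting expression is rewritten using Lemma~\ref{davidlem9}(b),(d),(f) in the form prescribed by $f_1,g_1,f_2$ or $g_2$. The two $\Leftrightarrow$s in each triple come from pairs such as $(yx)\backslash x=x\backslash(y\backslash x)$, which makes the two rewritten expressions literally the same element. For the accompanying ``if and only if CIPL/LIPL/RIPL'' items (j),(l),(n),(p),(r),(t),(v),(x), I substitute $z=y^{-1}$ (resp.\ $x^{-1}$) into the just-derived equivalence and identify the surviving identity as the familiar inverse-property identity, using $z^\rho=z^\lambda=z^{-1}$ from Lemma~\ref{davidlem9}(a) to justify the interchange of $y^{-1}$ with $y^\rho$ and $y^\lambda$. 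The main obstacle is not conceptual but bookkeeping: consistently tracking which variable plays which role in each substitution, and translating cleanly between $\cdot,\backslash,/$ and $(\ )^{-1}$; no new idea beyond Lemma~\ref{davidlem9} is required.
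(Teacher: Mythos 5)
Your proposal is correct and follows essentially the same route as the paper: both derive (a) and (c) by substituting $z=x$ and $y=x$ into the second middle Bol identity $(x/yz)x=(x/z)(y\backslash x)$, obtain (b)--(g) by reading these as quasigroup-equation equivalences together with Lemma~\ref{davidlem9}(b),(d),(f), and handle (i)--(x) by solving each base equation for $z$ and then specializing $z=y^{-1}$ to recover the CIP/LIP/RIP identities. The only difference is cosmetic (e.g.\ you reduce the four equalities in (g) directly to $x/y=x\backslash y$ rather than routing through Lemma~\ref{davidlem9}(f), and you note that (d) is Lemma~\ref{davidlem9}(c) verbatim), and in fact your plan is somewhat more explicit than the paper's terse ``Use (a)/Apply (i)'' style.
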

\begin{proof}
This is achieved by using the identity in 3. of Lemma~\ref{davidlem11} i.e.
\begin{equation}\label{MBLI2}
  (x/yz)x = (x/z)(y\backslash x)
\end{equation}
the ways in which the identity in 2. of Lemma~\ref{davidlem11} was used to prove the results in Lemma~\ref{davidlem9}.
\begin{description}
  \item[(a)] Substitute $z=x$ in \eqref{MBLI2}.
    \item[(b)] Use (a).
  \item[(c)] Substitute $y=x$ in \eqref{MBLI2}.
    \item[(d)] Use (c).
  \item[(e)] Follows from (c).
  \item[(f)]  Follows from (a).
   \item[(g)] From (a) and (c), $x\backslash y=(y/xy)y $ and $x/z=(x/xz)x$. So, $(Q,\backslash)\equiv(Q, /)\Leftrightarrow [x/(xy)]x=[y/(xy)]y$.
  The equivalence to the others follows from Lemma~\ref{davidlem9}(f).
         \item[(i)] Let $z=yx\backslash x$, then $yx\cdot z =x$. So, $xz = [x/(yx)]x$. Using Lemma~\ref{davidlem9}(c) in addition,
  $yx\cdot z =x\Leftrightarrow xz = [x/(yx)]x\Leftrightarrow y\cdot xz =x$.
  \item[(j)] Apply (i).
  \item[(k)] Use (i).
\item[(l)] Apply (k).
 \item[(m)] Let $z=x/xy$, then $z\cdot xy =x$. So, $zx = x[(xy)\backslash x]$. Using Lemma~\ref{davidlem9}(c) in addition,
 $z\cdot xy =x\Leftrightarrow zx = x[(xy)\backslash x]\Leftrightarrow zx\cdot y =x$.
 \item[(n)] Apply (m).
\item[(o)] Use (m).
\item[(p)] Apply (o).
\item[(q)] Let $z=x/yx$, then $z\cdot yx =x$. So, $zx = x[(yx)\backslash x]$. Using (b) in addition,
 $z\cdot yx =x\Leftrightarrow zx = x[(yx)\backslash x]\Leftrightarrow y\cdot zx =x$.
\item[(r)] Apply (q).
\item[(s)] Use (q).
\item[(t)] Apply (s).
\item[(u)] Let $z=xy\backslash x$, then $xy\cdot z =x$. So, $zx = x[(yx)\backslash x]$. Using Lemma~\ref{davidlem9}(e) in addition,
 $xy\cdot z =x\Leftrightarrow xz = [x/(xy)]x\Leftrightarrow xz\cdot y =x$.
\item[(v)] Apply (u).
\item[(w)] Use (u).
\item[(x)] Apply (w)$(Q,\cdot )$ is a RIPL if and only if $xy^{-1}= f_2(x,y)\cdot x$.
\end{description}
\end{proof}

\begin{mylem}\label{davidlem13}
Let $(Q,\cdot, \backslash,/)$ be a middle Bol loop and let $f_1,g_1,f_2,g_2:Q^2\rightarrow Q$.
\begin{enumerate}
  \item $f_2(x,y)=x/(xy)\Leftrightarrow f_2(x,y)=(x/y)/x$;
      \begin{enumerate}
    \item $f_2(x,x)=f_2(e,x)=f_2(x^{-1}, x)=x^{-1}$.
     \item $f_2(x, x^{-1})=f_2(e,x^{-1})= x$.
    \item $f_2(x,e)=f_2(x^{-1},e)=f_2(e,e)= e$.
        \end{enumerate}
          \item  $g_2(x,y)=x/(yx)\Leftrightarrow g_2(x,y)=(y\backslash x)/x$;
        \begin{enumerate}
    \item $g_2(x,x)=g_2(e,x)=g_2(x^{-1},x)=x^{-1}$.
    \item $g_2(x, x^{-1})=g_2(e,x^{-1})= x$.
    \item $g_2(x,e)=g_2(x^{-1},e)=g_2(e,e)= e$.
          \end{enumerate}
           \item The following are equivalent:
      \begin{multicols}{3}
       \begin{enumerate}
     \item $f_2(x,y)=g_2(x,y)$.
     \item $(Q,\cdot)$ is commutative.
       \item $(Q,(\backslash)^*)\equiv(Q,/)$.
       \item $(Q,\backslash)\equiv(Q,(/)^*)$.
  \item $x/xy=(y\backslash x)/x$.
  \item $x/yx=(x/y)/x$.
  \end{enumerate}
  \end{multicols}
  \item The following are equivalent:
      \begin{multicols}{3}
       \begin{enumerate}
     \item $f_1(x,y)=f_2(x,y)$.
      \item $(yx\backslash x)(xy)=x$.
  \item $(yx)(x/xy)=x$.
  \item $x\backslash (y\backslash x)=(x/y)/x$.
  \end{enumerate}
  \end{multicols}
  \item The following are equivalent:
      \begin{multicols}{3}
       \begin{enumerate}
     \item $g_1(x,y)=g_2(x,y)$.
      \item $(xy\backslash x)(yx)=x$.
  \item $(xy)(x/yx)=x$.
  \item $x\backslash (x/y)=(y\backslash x)/x$.
  \end{enumerate}
  \end{multicols}
  \item The following are equivalent:
      \begin{multicols}{3}
       \begin{enumerate}
     \item $f_1(x,y)=g_2(x,y)$.
      \item $(yx)(x/yx)=x$.
  \item $(yx\backslash x)(yx)=x$.
  \item $x[(y\backslash x)/x]=y\backslash x$.
  \item $[(y\backslash x)/x]x=y\backslash x$.
  \end{enumerate}
  \end{multicols}
 \item The following are equivalent:
      \begin{multicols}{3}
       \begin{enumerate}
     \item $f_2(x,y)=g_1(x,y)$.
      \item $(xy\backslash x)(xy)=x$.
  \item $(yx\backslash x)(yx)=x$.
  \item $x[(x/y)/x]=x/y$.
  \item $[x\backslash (x/y)]x=x/y$.
  \end{enumerate}
  \end{multicols}
  \item $f_2(x,y)f_1(x,z)=x\big[(zx)(xy)\backslash x\big]=\big[x/(zx)(xy)\big]x$.
  \item $[(x/y)/x][x\backslash (z\backslash x)]=x\big[(zx)(xy)\backslash x\big]=\big[x/(zx)(xy)\big]x$.
 \item  $g_2(x,y)g_1(x,z)=x\big[(xz)(yx)\backslash x\big]=\big[x/(xz)(yx)\big]x$.
 \item  $\big[(y\backslash x)/x\big]\big[x\backslash (x/z)]=x\big[(xz)(yx)\backslash x\big]=\big[x/(xz)(yx)\big]x$.
 \item  $f_2(x,y)g_1(x,z)=x\big[(xz)(xy)\backslash x\big]=\big[x/(xz)(xy)\big]x$.
 \item  $\big[(x/y)/x\big]\big[x\backslash (x/z)]=x\big[(xz)(xy)\backslash x\big]=\big[x/(xz)(xy)\big]x$.
 \item  $g_2(x,y)f_1(x,z)=x\big[(xz)(yx)\backslash x\big]=\big[x/(xz)(yx)\big]x$.
 \item  $\big[(y\backslash x)/x\big]\big[x\backslash (z\backslash x)]=x\big[(xz)(yx)\backslash x\big]=\big[x/(xz)(yx)\big]x$.
 \end{enumerate}
\end{mylem}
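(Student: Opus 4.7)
The plan is to treat the lemma in three blocks: the definitional equalities and corner values (parts 1--2), the equivalence lists comparing pairs of $f_i, g_i$ (parts 3--7), and the product formulas (parts 8--15). For parts 1 and 2, the equivalence of the two forms of $f_2$ and $g_2$ is already packaged in Theorem~\ref{davidlem12}(e),(f); the specific corner values are then routine evaluations. For example, $f_2(x,x)=x/(xx)$, so I reduce $x/(xx)$ by observing that $xx\cdot x^{-1}=x$ (which uses diassociativity-on-singletons, i.e. power associativity of middle Bol loops), giving $x/(xx)=x^{-1}$; the mirror evaluations at $e,\, x^{-1}$ use the AAIP of Lemma~\ref{davidlem9}(a). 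I would tabulate these once and quote loop axioms without rewriting each line.

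For parts 3--7, the approach is purely definitional: substitute both the ``$/$-form'' and the ``$\backslash$-form'' of each mapping and read off the equivalence. In part 3, $f_2(x,y)=g_2(x,y)\iff x/(xy)=x/(yx)\iff xy=yx$, giving commutativity; the characterizations involving $(Q,(\backslash)^*)\equiv(Q,/)$ and $(Q,\backslash)\equiv(Q,(/)^*)$ are obtained by the same manipulation used in Lemma~\ref{davidlem10}(4). Parts 4--7 follow the same recipe: set the two expressions equal, apply $\cdot/\cdot$ or $\cdot\backslash\cdot$ to one side to clear a division, and recognize the resulting equation as one of the listed equivalents. Lemma~\ref{davidlem9}(c) and Theorem~\ref{davidlem12}(b),(d) provide the translations needed between the ``product $=x$'' form and the ``division'' form.

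For parts 8--15, the engine is the middle Bol identity read in its two equivalent shapes,
\begin{equation*}
(x/s)(t\backslash x)\;=\;x\bigl[(ts)\backslash x\bigr]\;=\;\bigl[x/(ts)\bigr]x,
\end{equation*}
where the first equality is \eqref{davideq13.1} and the second is Lemma~\ref{davidlem11}(3). Each product formula is obtained by choosing the ``$/$-form'' of the first mapping (so the left factor is of shape $x/s$) and the ``$\backslash$-form'' of the second (so the right factor is of shape $t\backslash x$), then instantiating $s,t$ by the appropriate product among $xy,yx,xz,zx$. Parts 9, 11, 13, 15 are then just the restatements using the \emph{other} representation of the same $f_i$ and $g_i$ (the pairs being equal by parts 1--2). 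So the whole block reduces to four instances of the displayed identity with $(s,t)\in\{(xy,zx),(yx,xz),(xy,xz),(yx,zx)\}$.

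The main obstacle is bookkeeping rather than mathematics: one must match each mapping to the slot of the MBL whose shape it fits (the right argument of $/$ or the left argument of $\backslash$), and one must be careful with non-commutativity so that, e.g., $f_1(x,z)=zx\backslash x$ is plugged in as $t=zx$ rather than $t=xz$. Once the two forms of the MBL are written down at the start and the four substitutions are listed, each line of the proof reduces to a one-step quotation, and the bi-definitional lemma of parts 1--2 handles the remaining renamings.
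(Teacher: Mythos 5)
Your proposal is correct and matches the paper's approach: the published proof of this lemma is literally the single sentence ``Use Theorem~\ref{davidlem12} and the hypothetic definitions of $f_i,g_i$,'' and your three-block expansion (routine corner evaluations for parts 1--2, pairwise equivalences via cancellation in the $/$ and $\backslash$ quasigroups for parts 3--7, and the four instantiations of $(x/s)(t\backslash x)=x\big[(ts)\backslash x\big]=\big[x/(ts)\big]x$ for parts 8--15) is exactly what that sentence leaves implicit. One remark: your substitution $(s,t)=(yx,zx)$ for $g_2(x,y)f_1(x,z)$ correctly yields $x\big[(zx)(yx)\backslash x\big]$, whereas items 14--15 as printed read $(xz)(yx)$ --- identical to the right-hand side of item 10, which would force $f_1=g_1$ and hence commutativity --- so your computation exposes a typo in the statement rather than a gap in your method.
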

\begin{proof}
Use Theorem~\ref{davidlem12} and the hypothetic definitions of $f_i,g_i,~i=1,2$.
\end{proof}

\begin{myth}\label{davidlem14}
Let $(Q,\cdot,\backslash,/)$ be a middle Bol loop and let $f_2,g_2:Q^2\rightarrow Q$ and $\phi_i,\psi_i:Q^i\rightarrow Q$ be defined as:
\begin{gather*}
f_2(x,y)=x/(xy)~\textrm{or}~f_2(x,y)=(x/y)/x~\textrm{and}~g_2(x,y)=x/(yx)~\textrm{or}~ g_2(x,y)=(y\backslash x)/x,\\
\phi_i(x_1,x_2,\ldots,x_i)=x_i(x_{i-1}(\ldots (x_5(x_4(x_3(x_2x_1))))\ldots ))~\textrm{and}\\
\psi_i(x_1,x_2,\ldots,x_i)=((\ldots((x_1/x_2)/x_3)\ldots )/x_{i-1})/x_i~\forall~i\in\mathbb{N}.
\end{gather*}
The following are true.
\begin{enumerate}
  \item $f_2\big(x,\phi_n(y,x,x,\ldots ,x)\big)=\psi_n\big(f_2(x,y),x,x,\ldots,x\big)$.
  \item $f_2\big(x,\phi_{n+1}(x,y,x,x,\ldots ,x,x)\big)=\psi_{n+1}\big(g_2(x,y),x,x,\ldots,x\big)$.
  \item $(Q,\cdot)$ has the LAP if and only if $f_2(x,y)=[x/(x^2y)]x$.
  \item $(Q,\cdot)$ has the PLAP if and only if $\psi_n\big(f_2(x,y),x,x,\ldots,x\big)\cdot x^ny=x$.
   \item If $(Q,\cdot)$ has the LAP, then $(Q,\cdot)$ is of exponent $2$ if and only if $f_2(x,y)=(x/y)x$.
   \item If $(Q,\cdot)$ has the PLAP, then $(Q,\cdot)$ is of exponent $n$ if and only if

   $\psi_n\big(f_2(x,y),x,\ldots,x\big)\cdot y=x$.
\end{enumerate}
\end{myth}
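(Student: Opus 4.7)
The plan is to mirror the argument of Theorem~\ref{davidlem10.1}, replacing the base translation-identity derived from Lemma~\ref{davidlem9}(b) with the ``left-handed'' analogue from Lemma~\ref{davidlem12}(c): $x/(xz)=(x/z)/x$. Rewriting this in the translation-notation of the preliminaries gives $L_x\mathbb{L}_x=\mathbb{L}_x\mathbb{R}_x$, equivalently
\[
L_x=\mathbb{L}_x\mathbb{R}_x\mathbb{L}_x^{-1}.
\]
A straightforward induction on $n$ then yields $L_x^n=\mathbb{L}_x\mathbb{R}_x^n\mathbb{L}_x^{-1}$ for every $n\ge 1$; the auxiliary Lemma~\ref{davidlem12}(a), $x/(yx)=(y\backslash x)/x$, supplies what is needed when the ``marked'' variable $y$ sits in position~$2$ rather than position~$1$.

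For Item~1, I would unfold $\phi_n(y,x,\ldots,x)=yL_x^{n-1}$ and compute
\[
f_2\big(x,\phi_n(y,x,\ldots,x)\big) \;=\; x\big/(x\cdot yL_x^{n-1}) \;=\; x\big/yL_x^n.
\]
Applying Lemma~\ref{davidlem12}(c) a total of $n-1$ times, with $z$ ranging over $yL_x^{n-1}, yL_x^{n-2},\ldots ,yL_x$, peels off one left factor of $x$ at a time and replaces it with a right division by $x$, so that the expression collapses to $n-1$ iterated right divisions by $x$ applied to $x/yL_x=x/(xy)=f_2(x,y)$; this is precisely $\psi_n\big(f_2(x,y),x,\ldots,x\big)$. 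Item~2 proceeds identically: $\phi_{n+1}(x,y,x,\ldots,x)$ unfolds to $(yx)L_x^{n-1}$, and iterating Lemma~\ref{davidlem12}(c) $n$ times reduces $x/\big(x\cdot (yx)L_x^{n-1}\big)$ to $n$ right divisions by $x$ applied to $x/(yx)=g_2(x,y)$, giving $\psi_{n+1}\big(g_2(x,y),x,\ldots,x\big)$.

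The remaining items are direct corollaries. Item~3: LAP is the statement $x\cdot xy=x^2y$, and substituting $x(xy)=x^2y$ into Lemma~\ref{davidlem12}(c) yields $x/(x^2y)=f_2(x,y)/x$, i.e.\ $[x/(x^2y)]x=f_2(x,y)$; the converse reverses the steps and uses loop cancellation. Item~4: the equality $\psi_n\big(f_2(x,y),x,\ldots,x\big)\cdot yL_x^n=x$ is tautological from Item~1 and the definition of $/$, so PLAP ($yL_x^n=x^ny$) is equivalent to $\psi_n\big(f_2(x,y),x,\ldots,x\big)\cdot x^ny=x$ by loop cancellation. Items~5 and~6 then follow by substituting $x^2=e$ (respectively $x^n=e$) into Items~3 and~4 and cancelling.

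The main obstacle is bookkeeping: keeping the number of arguments of $\phi_n$ and $\psi_n$, the power of $L_x$, and the count of iterated right divisions by $x$ all in lockstep throughout the induction. Once the base translation identity $L_x=\mathbb{L}_x\mathbb{R}_x\mathbb{L}_x^{-1}$ is in hand, the algebraic content of the proof is a single routine induction entirely parallel to the $R_x$-version carried out in Theorem~\ref{davidlem10.1}.
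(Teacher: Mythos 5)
Your proposal is correct and follows essentially the same route as the paper: both rest on the translation identity $L_x=\mathbb{L}_x\mathbb{R}_x\mathbb{L}_x^{-1}$ obtained from $x/(xz)=(x/z)/x$, the induction $L_x^n=\mathbb{L}_x\mathbb{R}_x^n\mathbb{L}_x^{-1}$, and then reading off items 3--6 as specializations ($n=2$, exponent $2$, exponent $n$). The only cosmetic difference is that for item 2 the paper additionally derives $R_x=\mathcal{R}_x\mathbb{R}_x\mathbb{L}_x^{-1}$ and composes operators, whereas you iterate the element-level identity directly from the base case $x/(yx)=g_2(x,y)$; the content is the same.
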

\begin{proof}
This is very much similar to the proof of Theorem~\ref{davidlem10.1}.
\begin{enumerate}
  \item From the identity in Lemma~\ref{davidlem9}(c), we get
\begin{equation}\label{davideq12.1.1}
L_x=\mathbb{L}_x\mathbb{R}_x\mathbb{L}^{-1}_x
\end{equation}
By equation~\eqref{davideq12.1.1}, we claim that:
$L^n_x=\mathbb{L}_x\mathbb{R}^{n}_x\mathbb{L}^{-1}_x ,~n\geq 0$. Thus, for all $y\in Q$,
\begin{equation}\label{davideq17af.1}
x/[\underbrace{x(\cdots x(x(x\cdot x}_{\textrm{$n$-times}}y)))\cdots]=
((((x/y)/\underbrace{x)/x)/x\cdots)/x}_{\textrm{$n$-times}}
\end{equation}
Equation~\eqref{davideq17af.1} implies that $f_2\big(x,\phi_n(y,x,x,\ldots ,x)\big)=\psi_n\big(f_2(x,y),x,x,\ldots,x\big)$.
  \item From the identity in Lemma~\ref{davidlem9}(c), we get
\begin{equation}\label{davideq13.12}
R_x=\mathcal{R}_x\mathbb{R}_x\mathbb{L}^{-1}_x
\end{equation}
Therefore, by equation~\eqref{davideq12.1.1} and equation~\eqref{davideq13.12},
$R_xL^n_x\mathbb{L}_x= \mathcal{R}_x\mathbb{R}^{(n+1)}_x, ~n\geq 1$. Thus, for all $y\in Q$,
\begin{equation}\label{davideq18af.1}
x/[\underbrace{x(\cdots x(x(x}_{\textrm{$n$-times}}\cdot yx)))\cdots]=
((((y\backslash x)/\underbrace{x)/x)/x\cdots)/x}_{\textrm{$(n+1)$-times}}
\end{equation}
Equation~\eqref{davideq18af.1} implies that $f_2\big(x,\phi_{n+1}(x,y,x,x,\ldots ,x,x)\big)=\psi_{n+1}\big(g_2(x,y),x,x,\ldots,x\big)$.
  \item This follows from 1. when $n=2$.
  \item This follows from 1.
  \item This follows from 3.
  \item This follows from 4.
\end{enumerate}
\end{proof}

\begin{myth}\label{davidlem15}
Let $(Q,\cdot,\backslash,/)$ be a middle Bol loop and let $f_1,g_1,f_2,g_2:Q^2\rightarrow Q$ and $\alpha_i,\beta_i,\phi_i,\psi_i:Q^i\rightarrow Q$ be defined as:
\begin{gather*}
f_1(x,y)=yx\backslash x~\textrm{or}~f_1(x,y)=x\backslash(y\backslash x)~\textrm{and}~g_1(x,y)=xy\backslash x~\textrm{or}~ g_1(x,y)=x\backslash (x/y),\\
f_2(x,y)=x/(xy)~\textrm{or}~f_2(x,y)=(x/y)/x~\textrm{and}~g_2(x,y)=x/(yx)~\textrm{or}~ g_2(x,y)=(y\backslash x)/x,\\
\alpha_i(x_1,x_2,\ldots,x_i)=(\ldots(((x_1x_2)x_3)x_4)\ldots x_{i-1})x_i,\\
\beta_i(x_1,x_2,\ldots,x_i)=x_1\backslash (x_2\backslash (x_3\backslash (\cdots x_{i-2}\backslash (x_{i-1}\backslash x_i)\cdots))),\\
\phi_i(x_1,x_2,\ldots,x_i)=x_i(x_{i-1}(\ldots (x_5(x_4(x_3(x_2x_1))))\ldots ))~\textrm{and}\\
\psi_i(x_1,x_2,\ldots,x_i)=((\ldots((x_1/x_2)/x_3)\ldots )/x_{i-1})/x_i~\forall~i\in\mathbb{N}.
\end{gather*}
\begin{enumerate}
  \item The following are equivalent.
  \begin{enumerate}
    \item $(Q,\cdot)$ is a group.
    \item $x/y=\big[f_2(x,y)f_1(x,z)\big]/\beta_4(x,x,z,x)$.
    \item $x/y=\big[f_2(x,y)f_1(x,z)\big]/\beta_2\big(\phi_3(x,x,z),x\big)$.
    \item $z\backslash x=\psi_2\big(x,\alpha_3(x,x,y)\big)\backslash \big[f_2(x,y)f_1(x,z)\big]$.
    \item $z\backslash x=\psi_4(x,y,x,x)\backslash \big[f_2(x,y)f_1(x,z)\big]$.
    \item $\alpha_3(x,z,y)\cdot g_2(x,y)g_1(x,z)=x$.
    \item $g_2(x,y)g_1(x,z)\cdot \phi_3(x,z,y)=x$.
    \item $f_2(x,y)g_1(x,z)=\alpha_2\big(f_2(x,y),x\big)\beta_2\big(x,g_1(x,z)\big)$.
    \item $f_2(x,y)g_1(x,z)\cdot \phi_3(x,y,z)=x$.
    \item $\alpha_3(z,x,y)\cdot g_2(x,y)f_1(x,z)=x$.
    \item $g_2(x,y)f_1(x,z)=\psi_2\big(g_2(x,y),x\big)\phi_2\big(f_1(x,z),x\big)$.
  \end{enumerate}
     \item If $(Q,\cdot)$ is of exponent $2$, then $(Q,\cdot)$ is a group if and only if $(x/y)(z\backslash x)=f_2(x,y)f_1(x,z)$.
   \item If $(Q,\cdot)$ is flexible, then $(Q,\cdot)$ is a group if and only if $f_2(x,y)g_1(x,z)=\phi_2\big(x,f_2(x,y)\big)\psi_2\big(g_2(x,z),\big)$.
    \item The following are equivalent.
    \begin{multicols}{2}
    \begin{enumerate}
      \item $(Q,\cdot)$ is a Moufang loop.
      \item $\phi_3(z,y,x)\cdot g_2(x,y)g_1(x,z)=x$.
      \item $g_2(x,y)g_1(x,z)\cdot \alpha_3(y,z,x)=x$.
    \end{enumerate}
    \end{multicols}
    \item The following are equivalent.
    \begin{multicols}{2}
    \begin{enumerate}
      \item $(Q,\cdot)$ is an extra loop.
       \item $f_2(x,y)g_1(x,z)\cdot \alpha_3(z,x,y)=x$.
      \item $\phi_3(y,x,z)\cdot g_2(x,y)f_1(x,z)=x$.
         \end{enumerate}
         \end{multicols}
\end{enumerate}
\end{myth}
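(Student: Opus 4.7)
The overarching plan is to use Lemma~\ref{davidlem13} as a dictionary that rewrites each composite expression $f_i(x,y)g_j(x,z)$ and its analogues as a single middle Bol word of the form $x[W\backslash x]$ or $[x/W]x$ for a loop word $W$ in $x,y,z$. Once translated, each of the eleven equivalences in part (1), the three in (4), and the three in (5) reduces to an elementary identity whose validity amounts to a classical obstruction: associativity for groups, the Moufang identity for part (4), and a Fenyves identity of the $F_{13}$ or $F_{22}$ type for part (5). Parts (2) and (3) will be handled as specializations in which a side hypothesis (exponent two, respectively flexibility) collapses some of the words.

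For part (1), the forward direction (group implies each identity) is mechanical, since in any group $f_1(x,y)=x^{-1}y^{-1}x$, $g_1(x,y)=y^{-1}$, $f_2(x,y)=xy^{-1}x^{-1}$, $g_2(x,y)=y^{-1}$, and every $\alpha_i,\beta_i,\phi_i,\psi_i$ becomes a trivially associative string. For the converse I would argue a representative case and indicate that the others are analogous. Take (f): $\alpha_3(x,z,y)\cdot g_2(x,y)g_1(x,z)=x$ becomes, via Lemma~\ref{davidlem13}(11), the condition that $((xz)y)\cdot\{x[(xz)(yx)\backslash x]\}=x$. Setting $w=(xz)(yx)\backslash x$ yields simultaneously $(xz)(yx)\cdot w=x$ and $((xz)y)\cdot(xw)=x$, so $xw=((xz)y)\backslash x$. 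Applying Lemma~\ref{davidlem9}(b) with $u=(xz)(yx)/x$ (so $ux=(xz)(yx)$) gives $xw=u\backslash x$, and comparing forces $u=(xz)y$, i.e.\ $(xz)(yx)=((xz)y)\cdot x$. Because $L_x$ is a bijection of $Q$, $xz$ ranges over all of $Q$ for fixed $x$, so this is full associativity. The remaining ten conditions follow the same template, combining Lemma~\ref{davidlem13} with Lemma~\ref{davidlem9}(b),(d),(f),(i), Theorem~\ref{davidlem12}, and the middle Bol identity as rewritten in Lemma~\ref{davidlem11}.

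For parts (4) and (5), the same translation procedure produces, after cancellation, the Moufang identity $(xz)(yx)=x(zy)\cdot x$ (or an equivalent reformulation) in the Moufang case and Fenyves' $F_{13}$ or $F_{22}$ identities in the extra case; in the presence of the middle Bol identity these are precisely the defining conditions of those two loop classes. Part (2) uses Lemma~\ref{davidlem9}(h) to replace $x/y$ by $y\backslash x$ throughout, turning the hypothesis $(x/y)(z\backslash x)=f_2(x,y)f_1(x,z)$ into the associativity residue of part (1) read with $y,z$ replaced by their inverses. Part (3) uses flexibility to reassociate the blocks $f_2(x,y)\cdot x$ and $x\cdot g_1(x,z)$ produced by the $\phi_2,\psi_2$ expansions, after which the condition is an associativity triple and part (1) applies.

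The main obstacle is the combinatorial bookkeeping in the reverse direction of part (1): each of the eleven loop words must be massaged through the correct combination of Lemma~\ref{davidlem13} and Lemma~\ref{davidlem9} to arrive at an associativity residue that quantifies correctly over $Q$. A careless step--treating a composite like $xz$ as an abstract variable without invoking bijectivity of $L_x$--would yield only a weaker partial associativity. A secondary obstacle is in the Moufang and extra cases of parts (4) and (5), where the choice of which of $f_1,f_2,g_1,g_2$ appears in the product dictates whether the residue normalizes to the Moufang or to the extra $F_{13}/F_{22}$ form; distinguishing these requires careful tracking of the left/right side of each division symbol when unwinding Lemma~\ref{davidlem13}.
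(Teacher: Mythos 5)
Your proposal follows essentially the same route as the paper: rewrite each product $f_i(x,y)f_j(x,z)$, $g_i(x,y)g_j(x,z)$, etc.\ via items 8, 10, 12 and 14 of Lemma~\ref{davidlem13} as $x\big[W\backslash x\big]=\big[x/W\big]x$ for a Bol--Moufang word $W$, unwind with the middle Bol identity and Lemma~\ref{davidlem9}(b), and recognize the resulting condition on $W$ as one of the Fenyves identities forcing associativity, the Moufang identity, or the extra identity. Your worked case (f) is correct (it is exactly the paper's $F_{1}$ case); the only caution is that for the cases landing on $F_{32}$, $F_{33}$, $F_{34}$ and the like, the final step is not the direct ``$xz$ ranges over $Q$'' cancellation you demonstrate but an appeal to the Fenyves/Kinyon--Kunen/Phillips--Vojt\v echovsk\'y classification of which Bol--Moufang identities are equivalent to associativity, which is precisely the crutch the paper leans on as well.
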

 \begin{proof}
\begin{enumerate}
  \item
  \begin{description}
    \item[(a)$\Leftrightarrow$(b)] Using $F_{31}$ and 8. of Lemma~\ref{davidlem13}, $(Q,\cdot)$ is a group if and only if
        \begin{gather*}
        f_2(x,y)f_1(x,z)=x\big[(zx\cdot x)y\backslash x\big]=(x/y)\big[(zx\cdot x)\backslash x\big]=(x/y)\big[x\backslash (x\backslash (z\backslash x))\big]\Longleftrightarrow   \\
          x/y=\big[f_2(x,y)f_1(x,z)\big]/\beta_4(x,x,z,x).
        \end{gather*}
  \item[(a)$\Leftrightarrow$(c)] Using $F_{32}$ and 8. of Lemma~\ref{davidlem13}, $(Q,\cdot)$ is a group if and only if
  \begin{gather*}
        f_2(x,y)f_1(x,z)=x\big[(z\cdot xx)y\backslash x\big]=(x/y)\big[(z\cdot xx)\backslash x\big]\Longleftrightarrow   \\
          x/y=\big[f_2(x,y)f_1(x,z)\big]/\beta_2\big(\phi_3(x,x,z),x\big).
        \end{gather*}
    \item[(a)$\Leftrightarrow$(d)] Using $F_{33}$ and 8. of Lemma~\ref{davidlem13}, $(Q,\cdot)$ is a group if and only if
  \begin{gather*}
        f_2(x,y)f_1(x,z)=x\big[z(xx\cdot y)\backslash x\big]=\big(x/(xx\cdot y)\big)(z\ x)\Longleftrightarrow   \\
          z\backslash x=\psi_2\big(x,\alpha_3(x,x,y)\big)\backslash \big[f_2(x,y)f_1(x,z)\big].
        \end{gather*}
        \item[(a)$\Leftrightarrow$(e)] Using $F_{34}$ and 8. of Lemma~\ref{davidlem13}, $(Q,\cdot)$ is a group if and only if
  \begin{gather*}
        f_2(x,y)f_1(x,z)=x\big[z(x\cdot xy)\backslash x\big]=\big(x/(x\cdot xy)\big)(z\ x)=\big[\big((x/y)/x\big)/x\big](z\ x)\Longleftrightarrow   \\
          z\backslash x=\psi_4(x,y,x,x)\backslash \big[f_2(x,y)f_1(x,z)\big].
        \end{gather*}
        \item[(a)$\Leftrightarrow$(f)] Using $F_{1}$ and 10. of Lemma~\ref{davidlem13}, $(Q,\cdot)$ is a group if and only if
  \begin{gather*}
  g_2(x,y)g_1(x,z)=x\big[(xz\cdot y)x\backslash x\big]=(xz\cdot y)\backslash x\Longleftrightarrow   \\
        \alpha_3(x,z,y)\cdot g_2(x,y)g_1(x,z)=x.
        \end{gather*}
        \item[(a)$\Leftrightarrow$(g)] Using $F_{3}$ and 10. of Lemma~\ref{davidlem13}, $(Q,\cdot)$ is a group if and only if
  \begin{gather*}
  g_2(x,y)g_1(x,z)=x\big[x(y\cdot zx)\backslash x\big]=x/(y\cdot zx)\Longleftrightarrow   \\
        g_2(x,y)g_1(x,z)\cdot \phi_3(x,z,y)=x.
        \end{gather*}
        \item[(a)$\Leftrightarrow$(h)] Using $F_{11}$ and 12. of Lemma~\ref{davidlem13}, $(Q,\cdot)$ is a group if and only if
  \begin{gather*}
  f_2(x,y)g_1(x,z)=x\big[(xz\cdot x)y\backslash x\big]=(x/y)\big[(xz\cdot x)\backslash x\big]=(x/y)\big[(x\backslash (x\backslash (x/z)))\big]\Longleftrightarrow   \\
         f_2(x,y)g_1(x,z)=\big(f_2(x,y)\cdot x\big)\big(x\ g_1(x,z)\big)          \\
        f_2(x,y)g_1(x,z)=\alpha_2\big(f_2(x,y),x\big)\beta_2\big(x,g_1(x,z)\big).
        \end{gather*}
 \item[(a)$\Leftrightarrow$(i)] Using $F_{14}$ and 12. of Lemma~\ref{davidlem13}, $(Q,\cdot)$ is a group if and only if
  \begin{gather*}
  f_2(x,y)g_1(x,z)=x\big[x(zx\cdot y)\backslash x\big]=x/(z\cdot xy)\Longleftrightarrow   \\
        f_2(x,y)g_1(x,z)\cdot \phi_3(x,y,z)=x.
        \end{gather*}
         \item[(a)$\Leftrightarrow$(j)] Using $F_{21}$ and 14. of Lemma~\ref{davidlem13}, $(Q,\cdot)$ is a group if and only if
  \begin{gather*}
  g_2(x,y)f_1(x,z)=x\big[(zx\cdot y)x\backslash x\big]=(zx\cdot y\backslash x)\Longleftrightarrow   \\
       \alpha_3(z,x,y)\cdot g_2(x,y)f_1(x,z)=x.
        \end{gather*}
\item[(a)$\Leftrightarrow$(k)] Using $F_{24}$ and 14. of Lemma~\ref{davidlem13}, $(Q,\cdot)$ is a group if and only if
  \begin{gather*}
  g_2(x,y)f_1(x,z)=x\big[(z(x\cdot yx)\backslash x\big]=\big[x/(x\cdot yx)\big](z\backslash x)=\big[\big((y\backslash x)/x\big)/x\big](z\backslash x)\Longleftrightarrow   \\
   g_2(x,y)f_1(x,z)=\big(g_2(x,y)/x\big)\big(x\cdot f_1(x,z)\big)\Longleftrightarrow   \\
      g_2(x,y)f_1(x,z)=\psi_2\big(g_2(x,y),x\big)\phi_2\big(f_1(x,z),x\big).
        \end{gather*}
  \end{description}
     \item Apply 1.(c)
   \item Using $F_{23}$ and 14. of Lemma~\ref{davidlem13}, if $(Q,\cdot)$ is flexible, then $(Q,\cdot)$ is a group if and only if
  \begin{gather*}
  g_2(x,y)f_1(x,z)=x\big[(z(xy\cdot x)\backslash x\big]=\big[x/(xy\cdot x)\big](z\backslash x)=\big[x/\big(x\cdot yx)\big](z\backslash x)\Longleftrightarrow\\
 g_2(x,y)f_1(x,z)=\big[\big((y\backslash x)/x\big)/x\big](z\backslash x) \Longleftrightarrow   \\
   g_2(x,y)f_1(x,z)=\big(g_2(x,y)/x\big)\big(x\cdot f_1(x,z)\big)\Longleftrightarrow   \\
      g_2(x,y)f_1(x,z)=\psi_2\big(g_2(x,y),x\big)\phi_2\big(f_1(x,z),x\big).
        \end{gather*}
    \item
        \begin{description}
      \item[(a)$\Leftrightarrow$(b)] Using $F_{2}$ and 10. of Lemma~\ref{davidlem13}, $(Q,\cdot)$ is a Moufang loop if and only if
  \begin{gather*}
  g_2(x,y)g_1(x,z)=x\big[(x\cdot yz)x\backslash x\big]=(x\cdot yz)\backslash x\Longleftrightarrow\\
 \phi_3(z,y,x)\cdot g_2(x,y)g_1(x,z)=x.
           \end{gather*}
    \item[(a)$\Leftrightarrow$(c)] Using $F_{4}$ and 10. of Lemma~\ref{davidlem13}, $(Q,\cdot)$ is a Moufang loop if and only if
  \begin{gather*}
  g_2(x,y)g_1(x,z)=x\big[x(yz\cdot x)\backslash x\big]=x/(yz\cdot x)\Longleftrightarrow\\
 g_2(x,y)g_1(x,z)\cdot \alpha_3(y,z,x) =x.
 \end{gather*}
          \end{description}
        \item
         \begin{description}
      \item[(a)$\Leftrightarrow$(b)] Using $F_{13}$ and 12. of Lemma~\ref{davidlem13}, $(Q,\cdot)$ is an extra loop if and only if
  \begin{gather*}
  f_2(x,y)g_1(x,z)=x\big[x(zx\cdot y)\backslash x\big]=x/(zx\cdot y)\Longleftrightarrow\\
 f_2(x,y)g_1(x,z)\cdot \alpha_3(z,x,y)=x.
           \end{gather*}
    \item[(a)$\Leftrightarrow$(c)] Using $F_{22}$ and 14. of Lemma~\ref{davidlem13}, $(Q,\cdot)$ is an extra loop if and only if
  \begin{gather*}
  g_2(x,y)f_1(x,z)=x\big[(z\cdot xy)x\backslash x\big]=(z\cdot xy)\Longleftrightarrow\\
 \phi_3(y,x,z)\cdot g_2(x,y)f_1(x,z)=x.
           \end{gather*}
          \end{description}
\end{enumerate}
\end{proof}
We shall now establish some necessary and sufficient conditions for some identities of the type {\Large N=3,4,5,6} to be true in a middle loop. Of course, these identities are obviously true in a dissociative loop, hence, a RIF or WRIF loop has them.

\begin{myth}\label{davidlem16}
Let $(Q,\cdot,\backslash,/)$ be a middle Bol loop and let $g_1:Q^2\rightarrow Q$ be defined as:
$g_1(x,y)=xy\backslash x~\textrm{or}~ g_1(x,y)=x\backslash (x/y)$.
\begin{enumerate}
  \item $(Q,\cdot)$ is a {\Large 3}$_{12\cdot 1=1\cdot 21}^{2,1}$ loop if and only if $g_1(x,y)=x\cdot g_1(x,yx)$.
  \item $(Q,\cdot)$ is a {\Large 4}$_{(12\cdot 1)1=1\cdot (21\cdot 1)}^{3,1}$ loop if and only if $g_1(x,y)=x\cdot \big(x\cdot g_1(x,yx\cdot x)\big)$.
  \item $(Q,\cdot)$ is a {\Large 4}$_{(12\cdot 1)1=1\cdot (2\cdot 11)}^{3,1}$ loop if and only if $g_1(x,y)=x\cdot \big(x\cdot g_1(x,y\cdot xx)\big)$.
  \item If $(Q,\cdot)$ is a {\Large 3}$_{12\cdot 2=1\cdot 22}^{1,2}$ loop, the following are equivalent:
  \begin{multicols}{2}
  \begin{enumerate}
    \item $(Q,\cdot)$ is a {\Large 4}$_{(12\cdot 1)1=1\cdot (21\cdot 1)}^{3,1}$ loop.
    \item $(Q,\cdot)$ is a {\Large 4}$_{(12\cdot 1)1=1\cdot (2\cdot 11)}^{3,1}$ loop.
    \item $(Q,\cdot)$ is a {\Large 4}$_{12\cdot 11=1\cdot (2\cdot 11)}^{3,1}$ loop.
  \end{enumerate}
  \end{multicols}
\end{enumerate}
\end{myth}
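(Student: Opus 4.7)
The plan is to systematically exploit Lemma~\ref{davidlem9}(b), namely $yx\backslash x = x\backslash(y\backslash x)$, which converts a trailing right factor $x$ into a leading left divisor $x$. Substituting $xy$ for $y$ will immediately yield $(xy\cdot x)\backslash x = x\backslash(xy\backslash x) = x\backslash g_1(x,y)$, and iterating the substitution once more gives $((xy\cdot x)x)\backslash x = x\backslash\bigl(x\backslash g_1(x,y)\bigr)$. These two equalities are the engine for parts~1--3.

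For part~1, I expand $g_1(x,yx) = (x\cdot yx)\backslash x$ and observe that $g_1(x,y) = x\cdot g_1(x,yx)$ is equivalent to $x\backslash g_1(x,y) = g_1(x,yx)$, i.e.\ to $(xy\cdot x)\backslash x = (x\cdot yx)\backslash x$; cancelling the common ``$\backslash x$'' reduces this to $xy\cdot x = x\cdot yx$, which is exactly the flexibility identity labelled {\Large 3}$_{12\cdot 1=1\cdot 21}^{2,1}$. Parts~2 and 3 run the same argument with one extra layer: writing $g_1(x,yx\cdot x)=(x(yx\cdot x))\backslash x$ and $g_1(x,y\cdot xx)=(x(y\cdot xx))\backslash x$, the identity $g_1(x,y)=x\cdot\bigl(x\cdot g_1(x,w)\bigr)$ is equivalent to $x\backslash(x\backslash g_1(x,y)) = g_1(x,w)$, which by the iterated formula above reads $((xy\cdot x)x)\backslash x = (xw)\backslash x$; setting $w=yx\cdot x$ gives part~2 and $w=y\cdot xx$ gives part~3.

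For part~4, I assume only the RAP identity $yx\cdot x = y\cdot xx$. Applied directly, it converts $x\cdot(yx\cdot x)$ into $x\cdot(y\cdot xx)$, so (a)$\Leftrightarrow$(b). Applied with $xy$ in place of $y$, it converts $(xy\cdot x)x$ into $xy\cdot xx$, so (b)$\Leftrightarrow$(c). No further appeal to Lemma~\ref{davidlem9} is required here.

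The computations are essentially cosmetic once Lemma~\ref{davidlem9}(b) is in hand; the only care needed is tracking the product/division precedence and ensuring that each iterated application of the lemma uses the correct substitution ($y\mapsto xy$ in the innermost step, then $y\mapsto xy\cdot x$ in the next). Beyond that the argument is a transparent chain of cancellations, so I do not anticipate any genuine obstacle.
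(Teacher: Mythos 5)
Your proposal is correct and follows essentially the same route as the paper: both arguments apply ``$\backslash x$'' to the two sides of the candidate identity, rewrite the left side via the iterated form of Lemma~\ref{davidlem9}(b),(d) (which is exactly Equation~\eqref{davideq18af} for $n=1,2$), and then cancel. Your part~4, which applies the RAP identity directly to match up right-hand (resp.\ left-hand) sides, is a slightly more explicit version of the paper's one-line appeal to items 2 and 3, and is equally valid.
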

\begin{proof}
We shall often use Equation~\eqref{davideq18af}.
\begin{enumerate}
  \item $(Q,\cdot)$ is a {\Large 3}$_{12\cdot 1=1\cdot 21}^{2,1}$ loop if and only if $xy\cdot x=x\cdot yx$
  \begin{gather*}
    \Leftrightarrow (xy\cdot x)\backslash x =(x\cdot yx)\backslash x\Leftrightarrow x\backslash (x\backslash (x/y))= (x\cdot yx)\backslash x\Leftrightarrow  g_1(x,y)=x\cdot g_1(x,yx).
      \end{gather*}
    \item $(Q,\cdot)$ is a {\Large 4}$_{(12\cdot 1)1=1\cdot (21\cdot 1)}^{3,1}$ loop if and only if $(xy\cdot x)x=x\cdot (yx\cdot x)$
    \begin{gather*}
    \Leftrightarrow [(xy\cdot x)x]\backslash x = [x\cdot (yx\cdot x)]\backslash x\Leftrightarrow x\backslash (x\backslash (x\backslash (x/y)))=[x\cdot (yx\cdot x)]\backslash x\Leftrightarrow \\
    g_1(x,y)=x\cdot \big(x\cdot g_1(x,yx\cdot x)\big).
      \end{gather*}
  \item $(Q,\cdot)$ is a {\Large 4}$_{(12\cdot 1)1=1\cdot (2\cdot 11)}^{3,1}$ loop if and only if $(xy\cdot x)x=x\cdot (y\cdot xx)$
\begin{gather*}
\Leftrightarrow (xy\cdot x)x\backslash x=x\cdot (y\cdot xx)\backslash x\Leftrightarrow x\backslash (x\backslash (x\backslash (x/y)))=g_1(x,y\cdot xx)\Leftrightarrow\\
g_1(x,y)=x\cdot \big(x\cdot g_1(x,y\cdot xx)\big).
\end{gather*}
  \item This is achieved by assuming the hypothesis that $(Q,\cdot)$ is a {\Large 3}$_{12\cdot 2=1\cdot 22}^{1,2}$ loop and using 2. and 3..
\end{enumerate}
\end{proof}

\begin{myth}\label{davidlem17}
Let $(Q,\cdot,\backslash,/)$ be a middle Bol loop and let $g_1:Q^2\rightarrow Q$ be defined as:
$g_1(x,y)=xy\backslash x~\textrm{or}~ g_1(x,y)=x\backslash (x/y)$.
\begin{enumerate}
   \item $(Q,\cdot)$ is a {\Large 5}$_{(12\cdot 1)1\cdot 1=1\cdot (21\cdot 1)1}^{4,1}$ loop if and only if $g_1(x,y)=x\big[x\cdot \big(x\cdot g_1(x,(yx\cdot x)x)\big)\big]$.
  \item $(Q,\cdot)$ is a {\Large 5}$_{(12\cdot 1)1\cdot 1=1\cdot (21\cdot 11)}^{4,1}$ loop if and only if $g_1(x,y)=x\big[x\cdot \big(x\cdot g_1(x,yx\cdot xx)\big)\big]$.
       \item $(Q,\cdot)$ is a {\Large 5}$_{(12\cdot 1)1\cdot 1=1\cdot (2\cdot 11)1}^{4,1}$ loop if and only if $g_1(x,y)=x\big[x\cdot \big(x\cdot g_1(x,(y\cdot xx)x)\big)\big]$.
           \item The following are equivalent:
           \begin{multicols}{2}
           \begin{enumerate}
             \item $(Q,\cdot)$ is a {\Large 5}$_{(12\cdot 1)1\cdot 1=1\cdot 2(11\cdot 1)}^{4,1}$ loop.
             \item $(Q,\cdot)$ is a {\Large 5}$_{(12\cdot 1)1\cdot 1=1\cdot 2(1\cdot 11)}^{4,1}$ loop.
             \item $(Q,\cdot)$ is a {\Large 5}$_{(12\cdot 1)1\cdot 1=1\cdot 2(111)}^{4,1}$ loop.
             \item $g_1(x,y)=x\big[x\cdot \big(x\cdot g_1(x,yx^3)\big)\big]$.
           \end{enumerate}
           \end{multicols}
  \item If $(Q,\cdot)$ is a {\Large 3}$_{12\cdot 2=1\cdot 22}^{1,2}$ loop, then the following are equivalent:
  \begin{multicols}{2}
  \begin{enumerate}
    \item $(Q,\cdot)$ is a {\Large 5}$_{(12\cdot 1)1\cdot 1=1\cdot (21\cdot 1)1}^{4,1}$ loop.
    \item $(Q,\cdot)$ is a {\Large 5}$_{(12\cdot 1)1\cdot 1=1\cdot (21\cdot 11)}^{4,1}$ loop.
    \item $(Q,\cdot)$ is a {\Large 5}$_{(12\cdot 1)1\cdot 1=1\cdot (2\cdot 11)1}^{4,1}$ loop .
  \end{enumerate}
  \end{multicols}
   \item If $(Q,\cdot)$ is a {\Large 3}$_{(1\cdot 22)2=1\cdot (22\cdot 2)}^{1,3}$ loop, then the following are equivalent:
  \begin{multicols}{2}
  \begin{enumerate}
    \item $(Q,\cdot)$ is a {\Large 5}$_{(12\cdot 1)1\cdot 1=1\cdot (2\cdot 11)1}^{4,1}$ loop.
    \item $(Q,\cdot)$ is a {\Large 5}$_{(12\cdot 1)1\cdot 1=1\cdot 2(11\cdot 1)}^{4,1}$ loop.
    \item $(Q,\cdot)$ is a {\Large 5}$_{(12\cdot 1)1\cdot 1=1\cdot 2(1\cdot 11)}^{4,1}$ loop .
  \end{enumerate}
  \end{multicols}
 \item If $(Q,\cdot)$ is a {\Large 3}$_{12\cdot22=1\cdot (2\cdot 22)}^{1,3}$ loop, the following are equivalent:
  \begin{multicols}{2}
  \begin{enumerate}
    \item $(Q,\cdot)$ is a {\Large 5}$_{(12\cdot 1)1\cdot 1=1\cdot (21\cdot 11)}^{4,1}$ loop.
    \item $(Q,\cdot)$ is a {\Large 5}$_{(12\cdot 1)1\cdot 1=1\cdot (2\cdot 11)1}^{4,1}$ loop.
    \item $(Q,\cdot)$ is a {\Large 5}$_{(12\cdot 1)1\cdot 1=1\cdot 2(11\cdot 1)}^{4,1}$ loop.
  \end{enumerate}
  \end{multicols}
   \item If $(Q,\cdot)$ is a {\Large 3}$_{12\cdot 2=1\cdot 22}^{1,2}$ loop, {\Large 3}$_{(1\cdot 22)2=1\cdot (22\cdot 2)}^{1,3}$ loop and {\Large 3}$_{12\cdot22=1\cdot (2\cdot 22)}^{1,3}$ loop, then the following are equivalent:
  \begin{multicols}{2}
  \begin{enumerate}
    \item $(Q,\cdot)$ is a {\Large 5}$_{(12\cdot 1)1\cdot 1=1\cdot (21\cdot 1)1}^{4,1}$ loop.
    \item $(Q,\cdot)$ is a {\Large 5}$_{(12\cdot 1)1\cdot 1=1\cdot (21\cdot 11)}^{4,1}$ loop.
    \item $(Q,\cdot)$ is a {\Large 5}$_{(12\cdot 1)1\cdot 1=1\cdot (2\cdot 11)1}^{4,1}$ loop.
    \item $(Q,\cdot)$ is a {\Large 5}$_{(12\cdot 1)1\cdot 1=1\cdot 2(11\cdot 1)}^{4,1}$ loop.
    \item $(Q,\cdot)$ is a {\Large 5}$_{(12\cdot 1)1\cdot 1=1\cdot 2(1\cdot 11)}^{4,1}$ loop.
    \item $(Q,\cdot)$ is a {\Large 5}$_{12\cdot 111=1\cdot 2(111)}^{4,1}$ loop.
  \end{enumerate}
  \end{multicols}
\item $(Q,\cdot)$ is a {\Large 6}$_{\big[(12\cdot 1)1\cdot 1\big]1=1\cdot \big[(21\cdot 1)1\big]1}^{5,1}$ loop if and only if

$g_1(x,y)=x\cdot x\big[x\cdot \big(x\cdot g_1(x,(yx\cdot x)x\cdot x)\big)\big]$.
\end{enumerate}
\end{myth}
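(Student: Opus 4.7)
The plan is to follow the same template that was used in parts 1--3 of this theorem: rewrite the word identity as an equation in $Q$, apply right division by $x$ to both sides, collapse the left-hand side using equation~\eqref{davideq18af}, collapse the right-hand side using the definition of $g_1$, and then rearrange to isolate $g_1(x,y)$. Since each step in this chain is a genuine equivalence (right division in a loop being bijective), both directions of the biconditional will come out together.

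First I would parse the word identity $[(12\cdot 1)1\cdot 1]1=1\cdot[(21\cdot 1)1]1$ as the loop equation
\[
\big(((xy\cdot x)x)x\big)x \;=\; x\cdot\big(((yx\cdot x)x)x\big);
\]
the left-hand side is $xy$ left-associated with four appended copies of $x$, while the right-hand side has the shape $x\cdot w$ with $w=(yx\cdot x)x\cdot x$. Next I would divide both sides on the right by $x$. For the left-hand side, equation~\eqref{davideq18af} with $n=4$ produces $x\backslash\big(x\backslash(x\backslash(x\backslash(x\backslash(x/y))))\big)$, which, using $g_1(x,y)=x\backslash(x/y)$, collapses to $x\backslash\big(x\backslash(x\backslash(x\backslash g_1(x,y)))\big)$. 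For the right-hand side, the literal shape $x\cdot w$ together with the definition $g_1(x,w)=xw\backslash x$ turns it directly into $g_1\big(x,(yx\cdot x)x\cdot x\big)$. Finally, multiplying on the left by $x$ four times gives exactly the target formula
\[
g_1(x,y)=x\cdot x\big[x\cdot\big(x\cdot g_1(x,(yx\cdot x)x\cdot x)\big)\big],
\]
and reversing the steps supplies the converse implication.

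The main obstacle will be purely bookkeeping: I must make sure that the count of appended copies of $x$ on the left-associated product is exactly four, so that the exponent $n=4$ in the invocation of \eqref{davideq18af} and the depth of nested right divisions both line up with the formula being established. No new algebraic device beyond what has already appeared in parts 1--3 of this theorem is required.
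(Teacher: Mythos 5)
Your template is the right one and is exactly what the paper intends: its entire proof of this theorem is the single sentence ``This is similar to the proof of Theorem~\ref{davidlem16}'', and the machinery you invoke (convert the word identity to a loop equation, apply the bijection $A\mapsto A\backslash x$ to both sides, collapse the left side via Equation~\eqref{davideq18af} and the right side via $g_1(x,w)=xw\backslash x$, then peel off the nested $x\backslash$'s by left multiplication) is precisely what the paper uses there. One terminological slip: the operation you apply is $A\mapsto A\backslash x$ (the permutation $\mathcal{R}_x$), not ``right division by $x$'', which would be $A/x$; your displayed formulas show you mean the former, so nothing breaks. Your bookkeeping for item 9 checks out: four appended copies of $x$ give $n=4$ in \eqref{davideq18af}, hence five nested $x\backslash$ applied to $x/y$, i.e.\ four applied to $g_1(x,y)$, which matches the stated formula; items 1--3 are the $n=3$ instances of the same computation with the three right-hand words $(yx\cdot x)x$, $yx\cdot xx$ and $(y\cdot xx)x$.

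What your proposal does not address is items 4--8, which are not of the ``identity iff formula'' shape and so are not settled by the division-and-collapse template alone. For those you need the additional observation that the stated auxiliary hypotheses identify the various bracketings occurring as the second argument of $g_1$ (equivalently, as the second factor on the right-hand side of the word identity): power-associativity of a middle Bol loop gives $x(xx\cdot x)=x(x\cdot xx)=x\cdot x^{3}$ for item 4; the hypothesis $q_1q_2\cdot q_2=q_1\cdot q_2q_2$ of item 5 gives $(yx\cdot x)x=yx\cdot xx=(y\cdot xx)x$; and the hypotheses of items 6--8 play the analogous role. Once the arguments of $g_1$ coincide, the claimed equivalences follow from items 1--3 (and 4). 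This step is routine, but it is a genuinely different move from the one you describe, and a complete write-up must include it.
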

\begin{proof}
This is similar to the proof of Theorem~\ref{davidlem16}.
\end{proof}

\begin{myth}\label{davidlem16.1}
Let $(Q,\cdot,\backslash,/)$ be a middle Bol loop and let $g_2:Q^2\rightarrow Q$ be defined as:
$g_2(x,y)=x\backslash yx~\textrm{or}~ g_2(x,y)=(y\backslash x)/x$.
\begin{enumerate}
  \item $(Q,\cdot)$ is a {\Large 3}$_{12\cdot 1=1\cdot 21}^{2,1}$ loop if and only if $g_2(x,y)=g_2(x,xy)\cdot x$.
  \item $(Q,\cdot)$ is a {\Large 4}$_{1\cdot (1\cdot 21)=(1\cdot 12)1}^{3,1}$ loop if and only if $g_2(x,y)=g_2(x,x\cdot xy)x\cdot x$.
  \item $(Q,\cdot)$ is a {\Large 4}$_{1\cdot (1\cdot 21)=(11\cdot 2)1}^{3,1}$ loop if and only if $g_2(x,y)=g_2(x,xx\cdot y)x\cdot x$.
  \item If $(Q,\cdot)$ is a {\Large 3}$_{11\cdot 2=1\cdot 12}^{2,1}$ loop, then the following are equivalent:
  \begin{multicols}{2}
  \begin{enumerate}
    \item $(Q,\cdot)$ is a {\Large 4}$_{(1\cdot 12)1=1\cdot (1\cdot 21)}^{3,1}$ loop.
    \item $(Q,\cdot)$ is a {\Large 4}$_{(11\cdot 2)1=1\cdot (1\cdot 21)}^{3,1}$ loop.
    \item $(Q,\cdot)$ is a {\Large 4}$_{11\cdot 21=(11\cdot 2)1}^{3,1}$ loop.
  \end{enumerate}
  \end{multicols}
\end{enumerate}
\end{myth}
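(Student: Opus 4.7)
The plan is to mirror the template already established for Theorem~\ref{davidlem16} verbatim, with the roles of right- and left-division interchanged. Where that earlier proof divided each candidate identity by $x$ on the right and collapsed the result using Lemma~\ref{davidlem9}(d), here I will divide by $x$ on the left and collapse using Theorem~\ref{davidlem12}(a) and (c) (which give $x/(yx)=(y\backslash x)/x$ and $x/(xz)=(x/z)/x$), together with Equation~\eqref{davideq18af.1} when an extra iteration is needed. Throughout I use the version $g_2(x,y)=x/(yx)=(y\backslash x)/x$ that is consistent with Theorem~\ref{davidlem12} and Lemma~\ref{davidlem13}(2).

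For part 1, since $(Q,\cdot)$ is a quasigroup, flexibility $xy\cdot x=x\cdot yx$ is equivalent to $x/(xy\cdot x)=x/(x\cdot yx)$. The left-hand side, by Theorem~\ref{davidlem12}(a) with $y$ replaced by $xy$, equals $g_2(x,xy)$; the right-hand side, by Theorem~\ref{davidlem12}(c) with $z$ replaced by $yx$, equals $g_2(x,y)/x$. Rearranging gives exactly $g_2(x,y)=g_2(x,xy)\cdot x$, and every step is an equivalence. Parts 2 and 3 follow the same pattern with one extra iteration: the common left member $x\cdot(x\cdot yx)$ is reduced, via Equation~\eqref{davideq18af.1} (equivalently by applying Theorem~\ref{davidlem12}(c) twice), to $x/[x(x\cdot yx)]=(g_2(x,y)/x)/x$, while the right members $(x\cdot xy)x$ and $(xx\cdot y)x$ are reduced by Theorem~\ref{davidlem12}(a) with $y$ replaced respectively by $x\cdot xy$ and by $xx\cdot y$, yielding $g_2(x,x\cdot xy)$ and $g_2(x,xx\cdot y)$. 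Cross-multiplying then produces the stated reformulations.

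For part 4, the plan is to assume LAP, i.e.\ $xx\cdot y=x\cdot xy$ for all $x,y$. This immediately forces $g_2(x,x\cdot xy)=g_2(x,xx\cdot y)$, so by parts 2 and 3 the conditions (a) and (b) translate into the same identity in $g_2$; hence (a)$\Leftrightarrow$(b). To link (b) and (c), apply LAP with second argument $yx$: it gives $xx\cdot yx=x\cdot x(yx)=x(x\cdot yx)$. Substituting this equality into (c) turns $xx\cdot yx=(xx\cdot y)x$ into $x(x\cdot yx)=(xx\cdot y)x$, which is exactly (b); the reverse substitution runs the same way. All three conditions are therefore equivalent under LAP.

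The main obstacle is not conceptual but notational: keeping track of which variant of the left/right division appears at each step, and which argument position of $g_2$ is being modified, as one iterates Theorem~\ref{davidlem12}(a) and (c). Once the correspondence $x/(x\cdots(x\cdot yx))\leftrightarrow g_2(x,y)$-with-appended-$/x$'s is read off from Equation~\eqref{davideq18af.1}, the verification of each numbered item reduces to one or two mechanical substitutions, exactly as in Theorem~\ref{davidlem16}; no new algebraic input beyond Theorem~\ref{davidlem12} and Theorem~\ref{davidlem14} is required.
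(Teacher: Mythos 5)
Your proposal is correct and is essentially the paper's intended argument: the paper's entire proof of this theorem is the single remark that it is ``similar to the proof of Theorem~\ref{davidlem16} with the aid of Equation~\eqref{davideq18af.1},'' which is precisely the mirrored left-division template you carry out (including the sensible reading of the misprinted $g_2(x,y)=x\backslash yx$ as $x/(yx)$, consistent with Theorem~\ref{davidlem12}). Your write-up simply supplies the details the paper omits, and each step checks out.
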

\begin{proof}
This is similar to the proof of Theorem~\ref{davidlem16} with the aid of Equation~\eqref{davideq18af.1}.
\end{proof}

\begin{myth}\label{davidlem17.1}
Let $(Q,\cdot,\backslash,/)$ be a middle Bol loop and let $g_1:Q^2\rightarrow Q$ be defined as:
$g_1(x,y)=xy\backslash x~\textrm{or}~ g_1(x,y)=x\backslash (x/y)$.
\begin{enumerate}
   \item $(Q,\cdot)$ is a {\Large 5}$_{1\cdot 1(1\cdot 21)=1(1\cdot 12)\cdot 1}^{4,1}$ loop if and only if $g_2(x,y)=\big(g_2(x,x(x\cdot xy))x\cdot x\big)x$.
  \item $(Q,\cdot)$ is a {\Large 5}$_{1\cdot 1(1\cdot 21)=(11\cdot 12)1}^{4,1}$ loop if and only if $g_2(x,y)=\big(g_2(x,xx\cdot xy)x\cdot x\big)x$.
       \item $(Q,\cdot)$ is a {\Large 5}$_{1\cdot 1(1\cdot 21)=1(11\cdot 12)\cdot1}^{4,1}$ loop if and only if $g_2(x,y)=\big(g_2(x,x(xx\cdot y))x\cdot x\big)x$.
           \item The following are equivalent:
           \begin{multicols}{2}
           \begin{enumerate}
             \item $(Q,\cdot)$ is a {\Large 5}$_{1\cdot 1(1\cdot 21)=(1\cdot 11)2\cdot1}^{4,1}$ loop.
             \item $(Q,\cdot)$ is a {\Large 5}$_{1\cdot 1(1\cdot 21)=(11\cdot 1)2\cdot1}^{4,1}$ loop.
             \item $(Q,\cdot)$ is a {\Large 5}$_{1\cdot 1(1\cdot 21)=(111)2\cdot1}^{4,1}$ loop.
             \item $g_2(x,y)=\big(g_2(x,x^3y))x\cdot x\big)x$.
           \end{enumerate}
           \end{multicols}
  \item If $(Q,\cdot)$ is a {\Large 3}$_{1\cdot 12=11\cdot 2}^{2,1}$ loop, then the following are equivalent:
  \begin{multicols}{2}
  \begin{enumerate}
    \item $(Q,\cdot)$ is a {\Large 5}$_{1\cdot 1(1\cdot 21)=1(1\cdot 12)\cdot 1}$ loop.
    \item $(Q,\cdot)$ is a {\Large 5}$_{1\cdot 1(1\cdot 21)=(11\cdot 12)1}^{4,1}$ loop.
    \item $(Q,\cdot)$ is a {\Large 5}$_{1\cdot 1(1\cdot 21)=1(11\cdot 12)\cdot1}^{4,1}$ loop .
  \end{enumerate}
  \end{multicols}
   \item If $(Q,\cdot)$ is a {\Large 3}$_{1(11\cdot 2)2=(1\cdot 11)2}^{3,1}$ loop, then the following are equivalent:
  \begin{multicols}{2}
  \begin{enumerate}
    \item $(Q,\cdot)$ is a {\Large 5}$_{1\cdot 1(1\cdot 21)=1(11\cdot 2)\cdot1}^{4,1}$ loop.
    \item $(Q,\cdot)$ is a {\Large 5}$_{1\cdot 1(1\cdot 21)=(1\cdot 11)2\cdot1}^{4,1}$ loop.
    \item $(Q,\cdot)$ is a {\Large 5}$_{1\cdot 1(1\cdot 21)=(11\cdot 1)2\cdot1}^{4,1}$ loop .
  \end{enumerate}
  \end{multicols}
 \item If $(Q,\cdot)$ is a {\Large 3}$_{11\cdot12=(11\cdot 1)2}^{3,1}$ loop, the following are equivalent:
  \begin{multicols}{2}
  \begin{enumerate}
    \item $(Q,\cdot)$ is a {\Large 5}$_{1\cdot 1(1\cdot 21)=(11\cdot 12)1}^{4,1}$ loop.
    \item $(Q,\cdot)$ is a {\Large 5}$_{1\cdot 1(1\cdot 21)=1(11\cdot 2)\cdot1}^{4,1}$ loop.
    \item $(Q,\cdot)$ is a {\Large 5}$_{1\cdot 1(1\cdot 21)=(1\cdot 11)2\cdot1}^{4,1}$ loop.
  \end{enumerate}
  \end{multicols}
   \item If $(Q,\cdot)$ is a {\Large 3}$_{1\cdot 12=11\cdot 2}^{2,1}$ loop, {\Large 3}$_{1(11\cdot 2)2=(1\cdot 11)2}^{3,1}$ loop and {\Large 3}$_{11\cdot12=(11\cdot 1)2}^{3,1}$ loop, then the following are equivalent:
  \begin{multicols}{2}
  \begin{enumerate}
    \item $(Q,\cdot)$ is a {\Large 5}$_{1\cdot 1(1\cdot 21)=1(1\cdot 12)\cdot 1}^{4,1}$ loop.
    \item $(Q,\cdot)$ is a {\Large 5}$_{1\cdot 1(1\cdot 21)=(11\cdot 12)\cdot 1)}^{4,1}$ loop.
    \item $(Q,\cdot)$ is a {\Large 5}$_{1\cdot 1(1\cdot 21)=1(11\cdot 2)\cdot 1}^{4,1}$ loop.
    \item $(Q,\cdot)$ is a {\Large 5}$_{1\cdot 1(1\cdot 21)=(1\cdot 11)2\cdot 1)}^{4,1}$ loop.
    \item $(Q,\cdot)$ is a {\Large 5}$_{1\cdot 1(1\cdot 21)=(11\cdot 1)2\cdot 1}^{4,1}$ loop.
    \item $(Q,\cdot)$ is a {\Large 5}$_{111\cdot 21=(111)2\cdot 1}^{4,1}$ loop.
  \end{enumerate}
  \end{multicols}
\item $(Q,\cdot)$ is a {\Large 6}$_{1[1\cdot 1(1\cdot 21)]=1[1(1\cdot 12)]\cdot 1}^{5,1}$ loop if and only if

$g_2(x,y)=[[[g_2(x,(yx\cdot x(x(x\cdot xy))))x\cdot ]x]x]x$.
\end{enumerate}
\end{myth}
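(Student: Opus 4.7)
The argument runs in close parallel to the proof of Theorem~\ref{davidlem17}, using the dual tools: where that proof applied $(\cdot)\backslash x$ and invoked Equation~\eqref{davideq18af} together with $(yx)\backslash x = g_1(x,y)$, the present proof applies $x/(\cdot)$ and invokes Equation~\eqref{davideq18af.1} together with the identity $x/(yx) = g_2(x,y)$ (from Theorem~\ref{davidlem12}(a)). Thus $g_2$ replaces $g_1$ throughout.

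For each of items (1)--(3) and (9), the plan is the same. The stated loop identity has the shape $x\cdot U(x,y) = V(x,y)\cdot x$, where $U(x,y)$ is a nested left-multiplication of the form $x(x(\cdots(x\cdot yx)\cdots))$ and $V(x,y)\cdot x$ ends in the right-factor $x$. I would apply $x/(\cdot)$ to both sides. On the LHS, Equation~\eqref{davideq18af.1} collapses $x/[x(x(\cdots(x\cdot yx)\cdots))]$ to the iterated right-division $((((y\backslash x)/x)/x)\cdots)/x$, which, after extracting $g_2(x,y) = (y\backslash x)/x$, is $g_2(x,y)$ followed by a prescribed number of further $/x$'s. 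On the RHS, the relation $x/(Wx) = g_2(x,W)$ rewrites the expression as $g_2(x, V(x,y))$. Equating the two sides and undoing the remaining right divisions (by successively multiplying on the right by $x$) produces the displayed formula for $g_2(x,y)$.

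Items (4)--(8) assert equivalences among several $N=5$ identities that differ only in how three consecutive $x$'s are associated. In item (4) the equivalence is automatic, because middle Bol loops are power associative (as recorded in the introduction) and hence $x\cdot xx$, $xx\cdot x$, and $x^3$ all coincide. For items (5)--(8), the auxiliary {\Large 3}-type hypothesis (for example $x\cdot xy = xx\cdot y$ in item (5)) lets one substitute one of these groupings for another inside the surrounding $N=5$ word, converting each listed identity into the next. The main obstacle throughout is not any single nontrivial step but bookkeeping: parsing each compact label (such as $1\cdot 1(1\cdot 21)=(11\cdot 12)1$) correctly, matching the nested left-multiplication to the precise instance of Equation~\eqref{davideq18af.1}, and in items (5)--(8) locating the exact subword at which the auxiliary {\Large 3}-identity must be applied. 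For item (9), the same machinery with one extra level of nesting (that is, using $n=4$ rather than $n=3$ in Equation~\eqref{davideq18af.1}) yields the $N=6$ formula.
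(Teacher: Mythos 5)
Your proposal matches the paper's intended argument: the paper's proof is a one-line appeal to the method of Theorem~\ref{davidlem16}, namely applying the appropriate division to both sides and collapsing the nested word, and your dualization via $x/(\cdot)$, Equation~\eqref{davideq18af.1}, and $x/(yx)=g_2(x,y)$ is exactly that method transported to $g_2$. The treatment of items (4)--(8) via power associativity and the auxiliary {\Large 3}-type identities likewise mirrors how item 4 of Theorem~\ref{davidlem16} is handled, so this is essentially the same proof, spelled out in more detail than the paper gives.
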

\begin{proof}
This is similar to the proof of Theorem~\ref{davidlem16}.
\end{proof}

\end{document}